\newtheoremstyle{myremark} 
    {7pt}                    
    {7pt}                    
    {}  	                 
    {}                           
    {\bf}       	         
    {.}                          
    {.5em}                       
    {}  
\theoremstyle{plain}
\newtheorem{lemma}{Lemma}[section]
\newtheorem{theorem}[lemma]{Theorem}
\newtheorem{fact}[lemma]{Fact}
\newtheorem{definition}[lemma]{Definition}
\newtheorem{corollary}[lemma]{Corollary}
\newtheorem{proposition}[lemma]{Proposition}
\theoremstyle{myremark}
\newtheorem{remark}[lemma]{Remark}
\newcommand{\nicec}{\mathcal{C}}
\newcommand{\er}{\mathbb{R}}
\newcommand{\rat}{\mathbb{Q}}
\newcommand{\zet}{\mathbb{Z}}
\newcommand{\htpyequiv}{\simeq}
\newcommand{\autom}{\mathrm{Aut}}
\renewcommand{\subset}{\subseteq}
\newcommand{\susp}{\Sigma}
\newcommand{\cl}{\mathrm{Cl}}
\newcommand{\lk}{\mathrm{lk}}
\newcommand{\st}{\mathrm{st}}
\newcommand{\overh}{\widetilde{H}}
\newcommand{\overk}{\widetilde{K}}
\newcommand{\overv}{\widetilde{V}}
\newcommand{\longversion}[1]{}
\begin{document}
\title{Transitivity is not a (big) restriction on homotopy types}
\author{Micha{\l} Adamaszek}
\address{Max Planck Institute for Informatics, 66123 Saarbr\"ucken, Germany}
\email{aszek@mimuw.edu.pl}
\keywords{Vertex-transitivity, Cayley graph, Clique complex, Golomb ruler}
\begin{abstract}
For every simplicial complex $K$ there exists a vertex-transitive simplicial complex homotopy equivalent to a wedge of copies of $K$ with some copies of the circle. It follows that every simplicial complex can occur as a homotopy wedge summand in some vertex-transitive complex. One can even demand that the vertex-transitive complex is the clique complex of a Cayley graph or that it is facet-transitive.
\end{abstract}

\maketitle

\section{Introduction}

This note is a result of considering the following problem: which homotopy types can be represented by simplicial complexes with vertex-transitive group actions?

Right from the outset we resolve the usual notational clash by declaring that the letter $G$ will denote graphs and $\Gamma$ --- groups. We will also use $K$ for simplicial complexes and $X$ for an object which can be a simplicial complex or a graph.

A simplicial complex or graph $X$ is \emph{vertex-transitive} if the automorphism group $\autom(X)$ acts transitively on the set of vertices $V(X)$. Objects with such rich symmetry groups appear naturally in various constructions in combinatorial topology and related areas. A good example are simplicial complexes encoding graph properties, see \cite{jakobsbook}, or Cayley graphs. Vertex-transitive triangulations of manifolds are particularly intriguing and a lot of effort went into finding minimal examples of those, see for instance \cite{KoLutz}. In this note we concentrate just on homotopy, asking to what extent vertex-transitive simplicial complexes of various kinds model homotopy types of finite CW-complexes.

\medskip
The following is a summary of the results we prove later in this note.
\begin{theorem}
\label{thm:super-main}
For any finite, connected simplicial complex $K$ there exists a vertex-transitive, finite simplicial complex $\widetilde{K}$ with a homotopy equivalence
\begin{equation}
\label{eq:first} \textstyle\widetilde{K} \htpyequiv \textstyle\bigvee^n K \vee \bigvee^l S^1
\end{equation}
for some $n\geq 1$, $l\geq 0$. Furthermore, one can require $\widetilde{K}$ to satisfy one of the following additional conditions:
\begin{itemize}
\item[(i)] $\autom(\widetilde{K})$ has a cyclic subgroup whose action is vertex-transitive,
\item[(ii)] $\widetilde{K}$ is the clique complex of a Cayley graph of some finite group,
\item[(iii)] $\widetilde{K}$ is simultaneously vertex-transitive and facet-transitive.
\end{itemize}
\end{theorem}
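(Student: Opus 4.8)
The plan is to build $\widetilde K$, in all three cases, as a \emph{union of translates} of an isomorphic copy of $K$ placed inside a finite group, and to read off the homotopy type from the overlap pattern of the translates. Fix a finite group $G$ and an injection $V(K)\hookrightarrow G$; write $K'$ for the resulting copy of $K$ on a vertex set $A\subseteq G$ with $m:=\#V(K)$ elements, and set $\widetilde K:=\bigcup_{g\in G}gK'$, whose vertex set is $\bigcup_g gA=G$. Everything rests on choosing $G$ and the embedding so that $gK'\cap hK'$ consists of at most one vertex (and no edge) whenever $g\neq h$. This already forces the $|G|$ translates to be pairwise distinct, forces every vertex of $\widetilde K$ to lie in exactly $m$ of them (the translates through $v$ being $\{va^{-1}:a\in A\}$), and, once $AA^{-1}$ generates $G$ and $K$ is connected, forces $\widetilde K$ to be connected. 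The homotopy type is then determined: let $q\colon\bigsqcup_{g}gK'\to\widetilde K$ be the tautological map, which by the overlap hypothesis identifies points only at vertices; let $D'$ be $\bigsqcup_g gK'$ with a tree of $m-1$ extra edges attached on the $m$-element fibre $q^{-1}(v)$ for each $v\in G$. These trees are pairwise disjoint and contractible, and collapsing them turns $D'$ into $\widetilde K$, so $\widetilde K\simeq D'$. But $D'$ is $|G|$ disjoint copies of $K$ joined, by $|G|(m-1)$ edges none of which lies inside a single copy, into a connected complex, and collapsing a spanning tree built from a spanning tree of each copy together with $|G|-1$ of the extra edges shows $D'\simeq\bigvee^{|G|}K\vee\bigvee^{\,|G|(m-2)+1}S^1$. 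Hence \eqref{eq:first} holds with $n=|G|$ and $l=|G|(m-2)+1$. Throughout I assume $m\geq3$ (replace $K$ by a barycentric subdivision if necessary), and dispose of the case $K\simeq\ast$ by taking $\widetilde K$ to be a cycle graph.

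For (i), take $G=\zet/N$. From a Golomb ruler $B=\{0=b_1<\dots<b_m\}$ with $\gcd(b_2,\dots,b_m)=1$ --- such rulers exist for every $m$ --- and $N>2b_m$, the reduction $A=\{b_i\bmod N\}$ is a Sidon set in $\zet/N$ of size $m$. Being Sidon gives the overlap hypothesis at once (a common vertex of $gK'$ and $hK'$ produces two elements of $A$ with the prescribed difference $h-g\neq0$, hence just one such pair; a common edge would produce two equal nonzero differences); the condition $\gcd(b_i)=1$ makes $A-A$ generate $\zet/N$, and a Sidon set of size $\geq3$ is aperiodic, so the $N$ translates are distinct. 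The translation action of $\zet/N$ on $\widetilde K$ is cyclic and vertex-transitive, which is (i).

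For (ii), I need the same $\widetilde K$ to be \emph{flag}, so that it coincides with the clique complex of its $1$-skeleton $\mathrm{Cay}(G,S)$, where $S=\{x^{-1}y:\{x,y\}\text{ an edge of }K'\}$. Replace $K$ by a barycentric subdivision (now flag, with $\geq3$ vertices), put $x_1=e$, and let $x_2,\dots,x_m$ be the images of the free generators in a finite quotient $\Gamma$ of $F_{m-1}$ in which all reduced words of length $\leq 4$ are pairwise distinct --- such a quotient exists by residual finiteness of free groups. Then each $t\in S$ equals $x_a^{-1}x_b$ for a unique ordered pair $(a,b)$ with $\{a,b\}$ an edge of $K$, and for $t=x_a^{-1}x_b$, $t'=x_{a'}^{-1}x_{b'}\in S$, a short computation with reduced-word lengths and leading letters (using the genericity of the $x_i$) shows that $t^{-1}t'=x_b^{-1}x_ax_{a'}^{-1}x_{b'}\in S$ forces $a=a'$, and then $t^{-1}t'=x_b^{-1}x_{b'}$. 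Consequently every clique $\{e,t_1,\dots,t_k\}$ of $\mathrm{Cay}(\Gamma,S)$ has a common base index $a$, whence $\{a,b_1,\dots,b_k\}$ is a clique, hence (flagness of $K$) a face of $K$, and the clique equals $\{x_a^{-1}x_i:i\in\{a,b_1,\dots,b_k\}\}$, a translate of a face of $K'$. So $\widetilde K=\bigcup_g gK'$ is already flag --- the overlap hypothesis and flagness hold simultaneously --- hence it is the clique complex of $\mathrm{Cay}(\Gamma,S)$, which is connected because $x_1=e$ together with connectedness of $K$ force $S$ to generate $\Gamma$. The homotopy computation of the first paragraph applies verbatim, giving (ii).

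Part (iii) is, I expect, the hard case. Since a facet-transitive complex is necessarily pure, I would first replace $K$ by a homotopy-equivalent pure complex $K_{\mathrm{pure}}$ of dimension $d$, obtained by repeatedly attaching a $d$-simplex along each lower-dimensional facet (each attachment is a simplicial collapse of the new simplex onto that face, hence a homotopy equivalence, and it strictly decreases the number of low-dimensional facets, so the process terminates). A union-of-translates construction on $K_{\mathrm{pure}}$ is pure and vertex-transitive, but its facets fall into one automorphism orbit \emph{per} facet of $K_{\mathrm{pure}}$; since $K$ (and hence $K_{\mathrm{pure}}$) generically has no non-trivial automorphisms, these orbits cannot be fused by symmetrizing $K_{\mathrm{pure}}$ itself. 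The substance of (iii) is therefore to engineer a \emph{single} orbit of facets: the complex must be a union of translates of one simplex under a group acting transitively on both the vertices and the simplices, and the overlap pattern of these simplices must be arranged so that the complex is still homotopy equivalent to $\bigvee^{n}K\vee\bigvee^{l}S^1$. Making this work --- finding the right arithmetic (Golomb-ruler-type) configuration that both homogenizes the facets and reproduces the homotopy type --- is the main technical obstacle of the theorem.
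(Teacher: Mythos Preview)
Your arguments for (i) and (ii) are correct and close to the paper's. For (i) you use Golomb rulers in $\zet/N$ exactly as the paper does (Section~\ref{sect:simple}). For (ii) your approach is somewhat more hands-on: you verify flagness directly by tracking reduced words in a high-girth finite quotient of a free group, whereas the paper packages the same phenomenon as a ``cluster-girth $\geq 4$'' condition (Proposition~\ref{prop:clusters-topol}(i)) together with an abstract Golomb-type condition $\mathcal{G}(3)$ on a pair $(\Gamma,D)$ (Definition~\ref{def:r}). Both routes amount to the same mechanism, and your free-group computation is essentially a concrete instance of the paper's Proposition~\ref{prop:groupsexist}.

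The genuine gap is (iii), which you explicitly leave unfinished. Your proposed line --- replace $K$ by a homotopy-equivalent pure complex $K_{\mathrm{pure}}$ and take a union of translates --- runs into precisely the obstruction you name: the translates produce one facet-orbit \emph{per facet of $K_{\mathrm{pure}}$}, and nothing in the construction fuses them. The paper does not try to overcome this. Instead it changes the functor: rather than clique complexes or raw unions of translates, it uses the \emph{closed neighbourhood complex} $N[G]$ of a graph $G$, whose maximal faces are exactly the sets $N_G[v]$, so that vertex-transitivity of $G$ gives facet-transitivity of $N[G]$ for free. Two further ingredients make this recover the desired homotopy type: a universality lemma (Lemma~\ref{lem:neibhtpy}) asserting $N[H]\simeq K$ for $H=(\mathrm{bd}\,K)^{(1)}$, and a collapsing argument (Proposition~\ref{prop:clusters-topol}(ii)) showing that if $G$ is a cluster of copies of $H$ with cluster-girth $\geq 5$, then $N[G]$ deformation-retracts onto a cluster of copies of $N[H]$ of the same shape. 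Feeding in the very Cayley-graph cluster you built for (ii), but with the slightly stronger condition $\mathcal{G}(4)$ in place of $\mathcal{G}(3)$, yields a single Cayley graph $G$ with $\cl(G)\simeq N[G]\simeq\bigvee^n K\vee\bigvee^l S^1$, proving (ii) and (iii) simultaneously. The idea you are missing is therefore not a sharper arithmetic configuration but a different topological construction on the same graph.
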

The relations between various parameters appearing in \eqref{eq:first} as well as the possible choices of the group with transitive action are discussed in the last section.


The construction which satisfies (i) is very simple, but it introduces a technique used later in the proofs of (ii) and (iii). We will also need some auxiliary results about clique and neighbourhood complexes. This seems to be the first proof that the clique complexes of Cayley graphs can have such diverse homotopy types. A simplicial complex $K$ is \emph{facet-transitive} if the induced action of $\autom(K)$ on the set of facets (maximal faces) of $K$ is transitive. For the relation between vertex- and facet-transitivity see \cite{frank}. The complexes constructed for (iii) will be instances of closed neighbourhood complexes of Cayley graphs.

\smallskip
The basic idea behind the constructions is to stick together multiple copies of $K$ in such a way that in the resulting complex $\widetilde{K}$ every vertex belongs to $|V(K)|$ copies of $K$ and it plays the role of a different vertex of $K$ in each copy. In particular, each vertex link $\lk_{\overk}(u)$ is isomorphic to the disjoint union $\bigsqcup_{v\in K}\lk_K(v)$. Such a complex is a special case of what we call a \emph{cluster} of copies of $K$. Similar ideas appear in \cite{FinkR,Fink}, but we will have to refine them significantly to satisfy the conditions of Theorem~\ref{thm:super-main}.

The paper is laid out as follows. After some preliminaries we give an elementary proof of (i) in Section~\ref{sect:simple} (Corollary~\ref{cor:cyclic}).  In Section~\ref{sect:clusters} we prove some auxiliary results about topological constructions on clusters of graphs. In Section \ref{sect:cliques} we extend the technique used for (i) to prove Proposition~\ref{cor:cliquesok}, which implies (ii) and (iii). Additional remarks are gathered in Section~\ref{sect:last}.

\section{Preliminaries}
\label{sect:prelim}

We assume the reader is familiar with basic concepts of combinatorial topology, such as simplicial complexes, acyclic matchings in the sense of Discrete Morse Theory, the nerve lemma and the notion of Cayley graphs of groups. A useful reference is \cite{kozbook}.

We refine the notion of transitivity by saying that a simplicial complex or graph $X$ is \emph{$\Gamma$-vertex-transitive} if there is a subgroup $\Gamma\subset\autom(X)$ acting transitively on the vertices of $X$. The same extends to $\Gamma$-facet-transitivity.

The \emph{open neighbourhood} of a vertex $v$ in a simple, loopless, undirected graph $G$ is $N_G(v)=\{w~:~vw\in E(G)\}$. The \emph{closed neighbourhood} is $N_G[v]=N_G(v)\cup\{v\}$. For a connected graph $G$ we define three simplicial complexes with vertex set $V(G)$.
\begin{itemize}
\item The \emph{clique complex} $\cl(G)$. Its faces are all the cliques (complete subgraphs) of $G$.
\item The \emph{(open) neighbourhood complex} $N(G)$. Its faces are all sets $\sigma$ such that $\sigma\subset N_G(v)$ for some $v\in V(G)$.
\item The \emph{closed neighbourhood complex} $N[G]$. Its faces are all sets $\sigma$ such that $\sigma\subset N_G[v]$ for some $v\in V(G)$.
\end{itemize}
The first two constructions are classical. Clique complexes have been studied in various branches of geometry and combinatorics as flag complexes, independence complexes or Vietoris-Rips complexes. The open neighbourhood complexes appeared in topological lower bounds for the chromatic number of $G$. Both $N(G)$ and $N[G]$ are special cases of \emph{distance-neighbourhood} complexes, defined recently in \cite{cory}.

Clearly all three constructions are $\Gamma$-vertex-transitive for a $\Gamma$-vertex-transitive graph $G$. Note that each maximal face of $N[G]$ is of the form $N_G[v]$ for some vertex $v\in V(G)$. It follows easily that if $G$ is $\Gamma$-vertex-transitive then $N[G]$ is not only $\Gamma$-vertex- but also $\Gamma$-facet-transitive. The same applies to $N(G)$.

\begin{definition}
\label{def:cluster}
Let $X$ be a finite simplicial complex (or graph) which is the union of its induced subcomplexes (or induced subgraphs) $$X=X_1\cup\cdots\cup X_k$$ such that $X_i\cap X_j$ is either empty or it consists of a single vertex for $i\neq j$. We then call $X$ a \emph{cluster} of $X_1,\ldots,X_k$ and we call $X_1,\ldots,X_k$ the \emph{parts} of $X$.
\end{definition}

For example, if $X_1\cap\cdots\cap X_k\neq\emptyset$ then the cluster is a one-point union of $X_1,\ldots,X_k$. If $X_i\cap X_j=\emptyset$ for all $i\neq j$ then we obtain the disjoint union of the parts $X_i$. Every connected component of a cluster is again a cluster. The cluster is not defined uniquely by its parts; for this one needs to know also their intersection pattern. Let us make this intuitive notion precise.

\begin{definition}
\label{def:intpattern}
The \emph{shape} of a cluster $X=X_1\cup\cdots\cup X_k$ is the hypergraph with vertex set $V(X)$ and with the set of hyperedges $\{V(X_i)~:~i=1,\ldots,k\}$.
\end{definition}

For simplicial complexes even the homotopy type of $X$ is not determined by those of $X_1,\ldots,X_k$. However, we clearly have the following fact.

\begin{fact}
\label{fact:union}
If a finite, connected simplicial complex $K$ is a cluster of connected subcomplexes $K_1,\ldots,K_k$ then there is a homotopy equivalence
$$\textstyle K\htpyequiv K_1\vee\cdots\vee K_k\vee\bigvee^l S^1$$
for some $l\geq 0$. Moreover, the value of $l$ depends only on the shape of the cluster.
\end{fact}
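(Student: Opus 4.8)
The plan is to build the homotopy equivalence by induction on $k$, the number of parts, treating the cluster as an iterated gluing along single points. First I would set up the base case: for $k=1$ there is nothing to prove, with $l=0$. For the inductive step, write $K = (K_1\cup\cdots\cup K_{k-1}) \cup K_k$ and let $K' = K_1\cup\cdots\cup K_{k-1}$; note $K'$ is an induced subcomplex of $K$, hence itself a cluster, but it need not be connected. The intersection $K'\cap K_k$ is a finite set of vertices, one for each index $i<k$ with $V(K_i)\cap V(K_k)\neq\emptyset$ (some of these vertices may coincide). The key topological input is that gluing two complexes along a finite discrete set is, up to homotopy, the same as gluing them along a single point and then wedging on circles — one circle for each ``extra'' identification beyond the first within a given connected component. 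Concretely, if $A$ and $B$ are connected and $A\cap B=\{p_1,\ldots,p_m\}$ is discrete and nonempty, then $A\cup B \htpyequiv A\vee B\vee\bigvee^{m-1}S^1$; this follows by choosing a spanning tree-like path in each of $A$, $B$ through the $p_j$ (using connectedness), contracting, and counting the resulting loops — equivalently, it is a Mayer–Vietoris / van Kampen computation, or an application of the nerve-type gluing lemma.

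The main work is to organize this bookkeeping when $K'$ has several components. So the refined inductive step runs: let $C_1,\ldots,C_r$ be the connected components of $K'$; each $C_j$ is a cluster of a sub-collection of $\{K_1,\ldots,K_{k-1}\}$, and by induction $C_j \htpyequiv \bigvee_{K_i\subset C_j} K_i \vee \bigvee^{l_j} S^1$ where $l_j$ depends only on the shape of $C_j$. The vertex $v_k$ is attached to $K$; the set $K_k\cap C_j$ is a discrete set of size $m_j\geq 0$, and $\sum_j m_j \geq 1$ since $K$ is connected — more precisely, the components $C_j$ that actually meet $K_k$ get linked up through $K_k$ into one component, which is all of $K$. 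I would then apply the discrete-gluing lemma repeatedly: attaching $K_k$ to $C_1,\ldots,C_r$ along $\bigsqcup_j(K_k\cap C_j)$ produces, up to homotopy, $K_k \vee \bigvee_j C_j \vee \bigvee^{t} S^1$ where $t = \big(\sum_j m_j\big) - 1$ counts the number of point-identifications in excess of what a tree needs to connect the pieces $\{K_k, C_1,\ldots,C_r\}$ (only the $C_j$ with $m_j>0$ participate; any $C_j$ with $m_j=0$ would contradict connectedness of $K$, so in fact all $m_j\geq 1$ and $r$ of the pieces besides $K_k$ are present, giving $t = \sum_j m_j - r$ when we form a spanning tree on $r+1$ nodes — I would nail down the exact arithmetic during writing). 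Combining with the inductive expressions for the $C_j$ and the fact that the wedge is associative and commutative up to homotopy yields $K \htpyequiv \bigvee_{i=1}^k K_i \vee \bigvee^l S^1$ with $l = t + \sum_j l_j$.

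Finally, for the ``$l$ depends only on the shape'' claim: every quantity entering $l$ — the sizes $m_j$, the number of components $r$, and recursively the $l_j$ — is computed purely from the hypergraph with vertex set $V(K)$ and hyperedges $V(K_i)$, since connectedness of induced subclusters, their component decomposition, and the cardinalities of pairwise/collective vertex-intersections are all determined by which vertices lie in which hyperedges. Alternatively, and more cleanly, I could sidestep the recursion by noting that $l$ is forced by Euler characteristic and the rank of $H_1$: from $K\htpyequiv\bigvee_i K_i\vee\bigvee^l S^1$ one gets $\chi(K) = \sum_i\chi(K_i) - (k-1) + (\text{correction})$, but since $\chi(K_i)$ and $\chi(K)$ themselves are \emph{not} shape-invariants, the honest argument is the homological one: $l = \mathrm{rank}\,H_1(K) - \sum_i \mathrm{rank}\,H_1(K_i)$ is not obviously shape-invariant either, so the inductive combinatorial computation above is the right route. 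The main obstacle is purely organizational — keeping the circle-count consistent across the recursion when parts share vertices in complicated patterns — rather than any deep topology; the only genuine lemma needed is the discrete-set gluing equivalence $A\cup B\htpyequiv A\vee B\vee\bigvee^{|A\cap B|-1}S^1$ for connected $A,B$ with finite nonempty intersection, which is standard.
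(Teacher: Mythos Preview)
Your inductive approach is correct and will go through, but it is genuinely different from the paper's argument. The paper does \emph{not} peel off one $K_k$ at a time. Instead it makes a single global construction: take the disjoint union $K_1\sqcup\cdots\sqcup K_k$ with its natural map $p$ to $K$, span a tree on each non-trivial fibre $p^{-1}(x)$, and observe that contracting these disjoint trees recovers $K$ up to homotopy; since the resulting space is connected and built from the connected pieces $K_i$ plus finitely many arcs, it is a wedge of the $K_i$ with a graph. Shape-invariance of $l$ is then immediate, because the fibres $p^{-1}(x)$ and hence the added trees are determined by the hypergraph alone.

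Compared to this, your route trades a one-line global picture for an induction with some bookkeeping (tracking the components $C_j$, the intersection sizes $m_j$, and the circle count $t=\sum_j m_j - r$). Both rest on the same underlying fact --- gluing along a finite set is, up to homotopy, a wedge plus circles --- but the paper applies it once to all the $K_i$ simultaneously, whereas you apply it $k-1$ times. The payoff of the paper's version is brevity and a transparent reason for shape-invariance; the payoff of yours is an explicit recursive formula for $l$, at the cost of having to argue carefully that every ingredient in that recursion is read off from the hypergraph. Your hesitation about the Euler-characteristic and $H_1$-rank shortcuts is well placed: neither gives shape-invariance directly, and the combinatorial recursion (or the paper's fibre argument) is indeed the honest way.
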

\begin{proof}
Consider the natural map $p:K_1\sqcup\cdots\sqcup K_k\to K$. For each vertex $x$ of $K$ the preimage $p^{-1}(x)$ is a finite collection of vertices which depends only on the shape of $K$. Let $K'$ be the space obtained from $K_1\sqcup\cdots\sqcup K_k$ by spanning a tree (one-dimensional contractible complex) on the vertices of each non-trivial preimage $p^{-1}(x)$, $x\in V(K)$. The space $K$ is obtained from $K'$ by contracting all those disjoint trees; that means $K\htpyequiv K'$. Since all the subspaces $K_i$ of $K'$, as well as $K'$ itself, are connected, a sequence of deformations shows that $K'$ is equivalent to a one-point union of all the $K_i$ and a graph. 
\end{proof}

The next definition captures the length of the shortest cycle formed by the parts of a cluster.

\begin{definition}
\label{def:helly}
Suppose $X$ is a cluster of $X_1,\ldots,X_k$. The \emph{cluster-girth} of $X$ is the smallest number $l\geq 3$ such that there is a cycle $v_0,X_{i_0},v_1,X_{i_1},\ldots,v_{l-1},X_{i_{l-1}},v_0$ with all $v_0,\ldots,v_{l-1}$ pairwise distinct, all $i_0,\ldots,i_{l-1}$ pairwise distinct and such that $v_j\in V(X_{i_{j-1}})$ and $v_j\in V(X_{i_j})$ for $j=0,\ldots,l-1$ (indices modulo $l$). The cluster-girth is infinite if there is no such cycle.
\end{definition}

We remark that this definition coincides with the standard definition of girth for hypergraphs \cite[Ch.5,\S 1]{berge}, i.e. the cluster-girth is equal to the hypergraph-theoretic girth of the cluster's shape. 

As an example, every graph $G$ can be viewed as a cluster of its own edges, treated as induced subgraphs $G[\{u,v\}]$ for $uv\in E(G)$. With this interpretation the cluster-girth of $G$ equals its girth as a graph. However, $G$ can also be treated as a cluster of induced subgraphs $G[\{u,v\}]$ over \emph{all} pairs $\{u,v\}\subseteq V(G)$. In this case the cluster-girth always equals $3$.

\section{A simple construction with cyclic symmetry}
\label{sect:simple}

We will now present a construction which proves the main statement of Theorem~\ref{thm:super-main} and has the property (i) mentioned in that theorem.

\medskip
Suppose $K$ is any connected simplicial complex with $d$ vertices $v_1,\ldots,v_d$. Pick $d$ integers $0\leq a_1<\cdots<a_d$ subject to the condition
\begin{equation}
\label{eq:golomb1} \mathrm{if}\ a_j-a_i=a_{j'}-a_{i'}\  \mathrm{then}\ i=j\ \mathrm{or}\ j=j'.
\end{equation}
It is easy to pick any such set of integers, for instance $a_i=2^i$ will do. The sets which satisfy condition \eqref{eq:golomb1} are called \emph{Golomb rulers} (with $d$ marks). If we care about optimality we can take one of the known Golomb rulers with $a_d\approx d^2$ (see \cite{ET}).

Next, pick an integer $n>2a_d$ coprime with all the differences $a_j-a_i$ for $j>i$, and define $\overk$ to be the simplicial complex with vertex set $V(\overk)=\zet/n$ whose faces are all sets of the form
\begin{equation}
\label{eq:face}
\{x+a_{i_1},\ldots,x+a_{i_k}\}
\end{equation}
where $x\in \zet/n$ and $\{v_{i_1},\ldots,v_{i_k}\}$ is a face of $K$ (addition modulo $n$).

The complex $\overk$ is clearly $\zet/n$-vertex-transitive and the coprimality requirement guarantees that it is connected (it is the only use of that condition). Another point of view on \eqref{eq:face} is as follows. The assignment $v_i\to a_i$ for $i=1,\ldots,d$ embeds $K$ into $\overk$ as a simplicial complex with vertex set $\{a_1,\ldots,a_d\}\subset\zet/n$. Then $\overk$ is the smallest extension of $K$ to a $\zet/n$-vertex-transitive complex. 

\bigskip
Part (i) of Theorem~\ref{thm:super-main} clearly follows from Fact~\ref{fact:union} once we prove the following proposition.

\begin{proposition}
\label{prop:cycliccomplex}
The complex $\overk$ defined above is a cluster of $n$ subcomplexes, each isomorphic to $K$.
\end{proposition}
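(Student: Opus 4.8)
The plan is to exhibit the $n$ parts explicitly and then check the two requirements of Definition~\ref{def:cluster}. For $x\in\zet/n$ I set $W_x=\{x+a_1,\ldots,x+a_d\}\subset\zet/n$ and let $K_x$ denote the induced subcomplex of $\overk$ on the vertex set $W_x$; since $0\le a_1<\cdots<a_d<n$ the set $W_x$ has exactly $d$ elements. Translation by an element $t\in\zet/n$ is an automorphism of $\overk$ that carries $K_x$ isomorphically onto $K_{x+t}$, so all the parts are isomorphic and --- what is more useful --- every claim below may be verified after translating one of the parts involved to $K_0$.

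First I would show that $v_i\mapsto a_i$ defines an isomorphism $K\cong K_0$. It is a bijection of vertex sets (every singleton is a face of $\overk$, so $V(K_0)=W_0=\{a_1,\ldots,a_d\}$), and by \eqref{eq:face} it sends each face of $K$ to a face of $\overk$ contained in $W_0$, hence to a face of $K_0$. For the opposite inclusion, let $\{a_{i_1},\ldots,a_{i_k}\}$ be a face of $\overk$ with the $i_j$ pairwise distinct; if $k\le1$ there is nothing to prove, so suppose $k\ge2$. By \eqref{eq:face} this set equals $\{y+a_{j_1},\ldots,y+a_{j_k}\}$ for some $y\in\zet/n$ and some face $\{v_{j_1},\ldots,v_{j_k}\}$ of $K$, and after relabelling the $j$'s I may assume $a_{i_1}=y+a_{j_1}$ and $a_{i_2}=y+a_{j_2}$. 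Subtracting gives $a_{i_1}-a_{i_2}\equiv a_{j_1}-a_{j_2}\pmod n$; both sides lie in $[-a_d,a_d]$, so their difference is a multiple of $n$ of absolute value at most $2a_d<n$, hence zero, and the congruence is an equality in $\zet$. Since $i_1\ne i_2$, condition \eqref{eq:golomb1} forces $i_1=j_1$, so $y\equiv0\pmod n$; therefore $\{a_{i_1},\ldots,a_{i_k}\}=\{a_{j_1},\ldots,a_{j_k}\}$ and $\{v_{i_1},\ldots,v_{i_k}\}$ is a face of $K$. Thus $K_0\cong K$, and consequently $K_x\cong K$ for every $x$.

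It then remains to see that the $K_x$ actually form a cluster. On the one hand $\overk=\bigcup_{x\in\zet/n}K_x$: every $K_x$ is a subcomplex of $\overk$, and conversely any face $\{x+a_{i_1},\ldots,x+a_{i_k}\}$ of $\overk$ is contained in $W_x$, hence is a face of $K_x$. On the other hand $K_x\cap K_{x'}$ is the induced subcomplex of $\overk$ on $W_x\cap W_{x'}$, so I only need $|W_x\cap W_{x'}|\le1$ whenever $x\ne x'$, and by translating I may assume $x'=0$ and $x\ne0$. If $x+a_i=a_j$ and $x+a_{i'}=a_{j'}$ were two distinct points of $W_x\cap W_0$ then $a_j\ne a_{j'}$, and subtracting gives $a_j-a_i\equiv a_{j'}-a_{i'}\pmod n$, again an equality of integers because $n>2a_d$; now \eqref{eq:golomb1} yields either $j=j'$ --- excluded --- or $i=j$, which forces $x=a_j-a_i=0$, also excluded. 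Hence $|W_x\cap W_0|\le1$, so every pairwise intersection $K_x\cap K_{x'}$ is empty or a single vertex, and all the conditions of Definition~\ref{def:cluster} hold.

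The only step requiring any attention --- and it appears twice --- is the passage from a congruence of differences $a_j-a_i$ modulo $n$ to an honest equality of integers; this is precisely what the hypothesis $n>2a_d$ is there for, after which the Golomb property \eqref{eq:golomb1} finishes the argument. The remaining verifications are routine manipulations of the definition \eqref{eq:face}, and the coprimality condition imposed on $n$ plays no role here, since (as already noted) it is needed only to guarantee that $\overk$ is connected.
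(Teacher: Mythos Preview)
Your proof is correct and follows essentially the same approach as the paper: both define the parts as the induced subcomplexes on $\{x+a_1,\ldots,x+a_d\}$, both use $n>2a_d$ to lift congruences of differences to integer equalities, and both finish with the Golomb condition \eqref{eq:golomb1}. The only cosmetic difference is order: the paper first establishes that distinct parts meet in at most one vertex and then deduces the isomorphism $K\cong\overk_x$ from the resulting uniqueness of the part containing a given face of size $\ge 2$, whereas you prove the isomorphism $K\cong K_0$ by a direct (and essentially identical) Golomb computation before checking the intersection condition.
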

\begin{proof}
For any $x\in\zet/n$ let $\overk_x$ be the subcomplex of $\overk$ induced by the vertex set $\overv_x=\{x+a_i~:~i=1,\ldots,d\}$. 

We first show that $\overk$ is a cluster of $\overk_0,\ldots,\overk_{n-1}$. The property $\overk=\bigcup_x\overk_x$ follows directly from the definition of $\overk$. Next, suppose that for some $x\neq y$ the intersection of $\overv_x$ and $\overv_y$ contains two vertices $p\neq q$. Then
$$p=x+a_i=y+a_{i'},\ q=x+a_j=y+a_{j'}\ \pmod{n}.$$
It follows that
$$(a_j-a_i)-(a_{j'}-a_{i'})=0.$$
The last equality holds a priori only modulo $n$, but since the left hand side belongs to the interval $[-2a_d,2a_d]$ and $n>2a_d$, it holds also in $\zet$.
By the Golomb property \eqref{eq:golomb1} we conclude that either $i=j$, which gives $p=q$, or $j=j'$, which gives $x=y$. In either case we have a contradiction, so the cluster claim is proved. 

It also follows that each face of $\overk$ of cardinality at least two belongs to exactly one $\overk_x$ and therefore for each $x\in\zet/n$ the obvious homomorphism of complexes $K\to\overk_x$ given by $v_i\to x+a_i$ is an isomorphism.
\end{proof}

\begin{corollary}
\label{cor:cyclic}
For any finite, connected simplicial complex $K$ there exists an integer $n$ and a $\zet/n$-transitive simplicial complex $\overk$ with $n$ vertices which satisfies a homotopy equivalence
$$\overk\htpyequiv \textstyle \bigvee^n K\vee \bigvee^l S^1$$
for some $l\geq 0$.
\end{corollary}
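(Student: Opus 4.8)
The plan is to assemble the pieces already set up in this section. Given $K$ with $d$ vertices $v_1,\dots,v_d$, I would first fix a Golomb ruler $0\le a_1<\cdots<a_d$ satisfying \eqref{eq:golomb1} (for instance $a_i=2^i$) and then pick an integer $n>2a_d$ coprime to all the differences $a_j-a_i$, $j>i$; such an $n$ exists because only finitely many primes divide these $\binom{d}{2}$ differences, so any sufficiently large prime works. This produces the complex $\overk$ on vertex set $\zet/n$ whose faces are the translates $\{x+a_{i_1},\dots,x+a_{i_k}\}$ of faces $\{v_{i_1},\dots,v_{i_k}\}$ of $K$, as in \eqref{eq:face}. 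By construction $\overk$ has exactly $n$ vertices, and the translation $y\mapsto y+1$ on $\zet/n$ carries faces to faces, so $\overk$ is $\zet/n$-transitive; the coprimality of $n$ with the differences makes $\overk$ connected.

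Next I would invoke Proposition~\ref{prop:cycliccomplex}, which states precisely that $\overk$ is a cluster of the $n$ induced subcomplexes $\overk_0,\dots,\overk_{n-1}$, each isomorphic to $K$ via $v_i\mapsto x+a_i$. Since $K$ is connected, every part $\overk_x\cong K$ is connected, so Fact~\ref{fact:union} applies to the cluster $\overk=\overk_0\cup\cdots\cup\overk_{n-1}$ and yields a homotopy equivalence $\overk\htpyequiv \bigvee_{x}\overk_x\vee\bigvee^l S^1=\bigvee^n K\vee\bigvee^l S^1$ for some $l\ge 0$ (with $l$ depending only on the shape of the cluster). That is exactly the asserted statement, so the proof is complete.

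There is no deep obstacle here; all the real content lives in Proposition~\ref{prop:cycliccomplex} and Fact~\ref{fact:union}. The only steps that merit a line of care are: (a) confirming that an $n$ with $n>2a_d$ and coprime to every $a_j-a_i$ exists, handled by taking a large prime; and (b) checking that $\overk$ really is a simplicial complex, i.e. that the family of translated faces is downward closed. The latter is immediate: for fixed $x$ the map $v_i\mapsto x+a_i$ is injective modulo $n$ (since $0\le a_i<a_j\le a_d<n$ forces $a_j-a_i\not\equiv 0\pmod n$), so any subset of a face $\{x+a_{i_1},\dots,x+a_{i_k}\}$ is the translate of the corresponding subface of $K$, hence again a face of $\overk$.
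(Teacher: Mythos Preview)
Your argument is correct and follows exactly the route the paper intends: the corollary is stated immediately after Proposition~\ref{prop:cycliccomplex} precisely because the construction of $\overk$, Proposition~\ref{prop:cycliccomplex}, and Fact~\ref{fact:union} together give the result, and that is what you do. Your added remarks on the existence of a suitable $n$ and on downward closure of $\overk$ simply make explicit what the paper leaves implicit.
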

\section{Clusters and topological constructions}
\label{sect:clusters}

In this section we prepare some prerequisites for the proof of parts (ii) and (iii) of Theorem~\ref{thm:super-main}. 

\begin{proposition}
\label{prop:clusters-topol}
Suppose a graph $G$ is a cluster of induced subgraphs $G_1,\ldots,G_k$.
\begin{itemize}
\item[(i)] If $G$ has cluster-girth at least $4$ then $\cl(G)$ is isomorphic to a cluster of complexes\\ $\cl(G_1),\ldots,\cl(G_k)$ of the same shape as $G$.
\item[(ii)] If $G$ has cluster-girth at least $5$ then $N[G]$ is homotopy equivalent to a cluster of complexes $N[G_1],\ldots,N[G_k]$ of the same shape as $G$.
\end{itemize}
\end{proposition}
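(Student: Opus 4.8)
For part (i) the plan is to verify directly that $\cl(G)$ is a cluster of the subcomplexes $\cl(G_i)$, each $\cl(G_i)$ being identified with the induced subcomplex of $\cl(G)$ on $V(G_i)$ (a legitimate identification, since $G_i$ is induced). Two of the cluster axioms are automatic: the parts have vertex sets $V(G_i)$, so the shape agrees with that of $G$, and for $i\neq j$ the intersection $\cl(G_i)\cap\cl(G_j)$ is the induced subcomplex of $\cl(G)$ on $V(G_i)\cap V(G_j)$, a set of size at most one. The only real point is that $\cl(G)=\bigcup_i\cl(G_i)$, i.e.\ that every clique $\sigma$ of $G$ lies in the vertex set of a single part. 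For $|\sigma|\le 2$ this is clear from $V(G)=\bigcup_i V(G_i)$ and $E(G)=\bigcup_i E(G_i)$. For $|\sigma|\ge 3$ I would pick an edge $uv\subseteq\sigma$ and let $G_i$ be the unique part containing it; if some $w\in\sigma$ lay outside $V(G_i)$ then the edges $uw$ and $vw$ would lie in parts $G_j,G_{j'}$ different from $G_i$, the single-vertex intersection condition would force $j\neq j'$, and one would obtain the $3$-cycle $u,G_i,v,G_{j'},w,G_j,u$ in the cluster, contradicting cluster-girth $\ge 4$.

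For part (ii) I would first fix the target complex. Let $M$ be the complex on vertex set $V(G)$ whose faces are the sets $\sigma$ with $\sigma\subseteq V(G_i)$ and $\sigma\in N[G_i]$ for some $i$; this is well defined (a face of size $\ge 2$ determines $i$ uniquely), it is a cluster of the subcomplexes $N[G_i]$ of the same shape as $G$, and $M\subseteq N[G]$ because $N_{G_i}[w]\subseteq N_G[w]$. So it suffices to prove that $N[G]$ collapses onto $M$. Using cluster-girth $\ge 4$ one shows that a face $\tau$ of $N[G]$ with $\tau\subseteq V(G_i)$ automatically lies in $N[G_i]$ — indeed a dominating vertex of such a $\tau$ must itself lie in $V(G_i)$, for otherwise two elements of $\tau$ together with it span a $3$-cycle in the cluster. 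Consequently $N[G]\setminus M$ is precisely the set of faces of $N[G]$ not contained in the vertex set of any part, and every dominating vertex of such a face lies in at least two parts.

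To collapse $N[G]$ onto $M$ I would use discrete Morse theory. Fix a linear order on $V(G)$ and, for a face $\tau$ of $N[G]$, let $w(\tau)$ be the smallest vertex $x$ with $\tau\subseteq N_G[x]$. This is a poset map from the face poset of $N[G]$ to $V(G)$, so it is enough to exhibit an acyclic matching on each fibre $w^{-1}(v)$; on $w^{-1}(v)$ I would pair $\sigma$ with $\sigma\cup\{v\}$ whenever $v\notin\sigma$ (this preserves the value of $w$, hence stays inside the fibre, and a single-element matching is acyclic, as is the union over the fibres). Its critical cells are the faces $\tau$ with $w(\tau)=v\in\tau$ and $w(\tau\setminus\{v\})<v$. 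One checks, using cluster-girth $\ge 4$, that no matched pair has one member in $M$ and the other outside; the harder point — and the one I expect to be the main obstacle — is to show, using cluster-girth $\ge 5$, that every critical cell already lies in $M$. Here, assuming the contrary, $\tau$ lies in no single part and $v=w(\tau)$ lies in several parts; picking $a,b\in\tau$ in the neighbourhoods of $v$ coming from two distinct parts and setting $u:=w(\tau\setminus\{v\})$ (so $u<v$, $u$ dominates $\tau\setminus\{v\}$, and $u$ is not adjacent to $v$), one runs through the possible part-memberships of the four edges $va,vb,ua,ub$ (including the degenerate cases $u=a$ or $u=b$) and in every case produces a cycle of length $3$ or $4$ in the cluster — the fiddly book-keeping here being the technical core of the argument. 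Granting this, the matching restricts to a perfect acyclic matching on the face poset $N[G]\setminus M$, so $N[G]$ collapses onto $M$; in particular $N[G]\htpyequiv M$, a cluster of $N[G_1],\ldots,N[G_k]$ of the same shape as $G$, as required.
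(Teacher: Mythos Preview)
Your plan is correct and follows the same overall strategy as the paper: show that $\cl(G)$ (resp.\ a subcomplex $M\subseteq N[G]$) is the desired cluster, and in case~(ii) collapse $N[G]$ onto $M$ via the discrete Morse matching $\sigma\leftrightarrow\sigma\cup\{\text{dominator}\}$. The case analysis you sketch for the ``harder point'' does go through exactly as you anticipate.

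The one organizational difference worth noting is how the matching is set up. The paper first isolates and proves the key claim
\[
(\ast)\quad\text{every face }\sigma\notin M\text{ has a \emph{unique} vertex }v\text{ with }\sigma\subseteq N_G[v],
\]
using cluster-girth $\ge 5$; once $(\ast)$ is known, the matching on $N[G]\setminus M$ is automatically perfect and the Patchwork Theorem applies immediately, with no linear order needed. You instead introduce a linear order, define $w(\tau)$ as the \emph{smallest} dominator, and then argue that critical cells lie in $M$ --- but the ``fiddly book-keeping'' you defer is, after unwinding, exactly the proof of $(\ast)$ (your $v,u$ play the roles of the paper's two putative dominators, and your $a,b$ are its $u_1,u_2$). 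So the linear order is harmless but superfluous: proving $(\ast)$ directly gives a cleaner endgame and makes clear that the choice of dominator is forced rather than merely canonical.
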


Before proceeding with the proof it is helpful to note the following easy consequence of the definition of a cluster. Suppose that $G$ is a cluster of graphs with parts $G_1,\ldots,G_k$ and $xy\in E(G)$. Then there is a \emph{unique} index $i$ such that $xy\in E(G_i)$.

\begin{proof}
Part (i) is obvious since cluster-girth at least $4$ guarantees that every triangle of $G$ is contained in one of the $G_i$.

\smallskip
We proceed to prove (ii). Clearly $N[G]$ contains a cluster of $N[G_1],\ldots,N[G_k]$ as a subcomplex. We will show that $N[G]$ deformation retracts to this subcomplex by collapsing away all the faces which span across more than one vertex set $V(G_i)$. For this we need to verify two claims. 

\begin{itemize}
\item[(*)] Suppose $\sigma$ is a face of $N[G]$ which is not contained in any $V(G_i)$. Then there is \emph{exactly one} vertex $v\in V(G)$ for which $\sigma\subseteq N_G[v]$. 
\item[(**)] If $\sigma\subseteq V(G_i)$ is a face of $N[G]$ of cardinality at least $2$, then for any vertex $v\in V(G)$ such that $\sigma\subseteq N_G[v]$ we have $v\in V(G_i)$.
\end{itemize}

\begin{proof}[Proof of (*)] Such a vertex $v$ exists by the definition of faces of $N[G]$. Now suppose that there are $v_1\neq v_2$ such that $\sigma\subseteq N_G[v_1]\cap N_G[v_2]$. 

First suppose that $v_1,v_2\not\in\sigma$. Pick $u_1,u_2\in\sigma$ such that $v_1u_1\in E(G_i)$ and $v_1u_2\in E(G_j)$ with $i\neq j$. Such a choice is possible, as otherwise we would have $\sigma\subseteq V(G_i)$ for some $i$. Next, define $i',j'$ by $v_2u_1\in E(G_{i'})$ and $v_2u_2\in E(G_{j'})$. If we had $u_1\in V(G_j)$ then $v_1,u_2\in V(G_i)\cap V(G_j)$, so we have $u_1\not\in V(G_j)$ and, by symmetry, $u_2\not\in V(G_i)$. It follows that $i'\neq j$ and $i\neq j'$.

If $i'=i$ and $j'=j$ then $V(G_i)\cap V(G_j)$ contains $v_1$ and $v_2$, a contradiction.

If $i'=i$ and $j'\neq j$ then
$$v_1,G_i,v_2,G_{j'},u_2,G_j,v_1$$
is a cluster-cycle of length $3$. It follows that we can assume $i\neq i'$ and, by symmetry, $j\neq j'$. Now if $i'=j'$ then we have a cycle
$$v_1,G_i,u_1,G_{i'},u_2,G_j,v_1$$
hence $i'\neq j'$. But then there is a cycle of clusters
$$v_1,G_i,u_1,G_{i'},v_2,G_{j'},u_2,G_j,v_1$$
of length $4$. That gives a contradiction in the case $v_1,v_2\not\in\sigma$.

Now suppose $v_1\in\sigma$. Define $i$ by the condition $v_1v_2\in E(G_i)$. Let $u\in\sigma$ be any vertex such that $u\not\in V(G_i)$. Then $u\neq v_1,v_2$. Now $v_1,u,v_2$ is a triangle in $G$ whose one edge is in $E(G_i)$ and its opposite vertex is not in $V(G_i)$. That easily implies that the cluster-girth of $G$ is $3$, and the proof of (*) is complete.
\end{proof}

\begin{proof}[Proof of (**)]
Suppose that $v\not\in V(G_i)$. Let $v_1\neq v_2$ be any two vertices of $\sigma$. We have $v\neq v_1$ and $v\neq v_2$ since $v_1,v_2\in V(G_i)$ and $v\not\in V(G_i)$. If $j,l$ are defined by $vv_1\in E(G_j)$ and $vv_2\in E(G_l)$ then $j\neq i$ and $l\neq i$ since otherwise $v\in V(G_i)$. Also $j\neq l$ as otherwise $V(G_i)\cap V(G_j)$ would contain $v_1$ and $v_2$. Now $v,G_j,v_1,G_i,v_2,G_l,v$ is a cycle which shows that the cluster-girth of $G$ is $3$. That contradiction proves (**).
\end{proof}

\smallskip
We can now continue the proof of the proposition. Let $A\subset N[G]$ be the subcomplex consisting of all those faces $\sigma$ which are contained in some $V(G_i)$, formally $A=\bigcup_{i=1}^k N[G]\big[ V(G_i) \big]$, and let $B=N[G]\setminus A$. By (**) we have $N[G]\big[ V(G_i) \big]=N[G_i]$, which means that $A$ is a cluster of $N[G_i]$, $i=1,\ldots,k$.

By (*) $B$ can be written as a disjoint union $B=\bigsqcup_{v\in V(G)} B_v$ where 
$$B_v=\{\sigma\in B~:~\sigma\subseteq N_G[v]\}.$$
Consider the matching $\mathcal{M}_v$ on $B_v$ defined as
$$\mathcal{M}_v=\{(\sigma,\sigma\cup\{v\})~:~\sigma\in B_v,\ v\not\in\sigma\}.$$
Clearly $\mathcal{M}_v$ is acyclic. The matching $\mathcal{M}=\bigcup_v\mathcal{M}_v$ on $N[G]$ is acyclic by the Patchwork Theorem \cite[Thm. 11.10]{kozbook} since for $\sigma\in B_v$, $\tau\in B_w$ and $v\neq w$ we have $\sigma\not\subset\tau$ by (*).

The set of critical faces of $\mathcal{M}$ forms the subcomplex $A\subset N[G]$, therefore $N[G]$ simplicially collapses to $A$ and (ii) is proved.
\end{proof}

\begin{remark}
The matching $\mathcal{M}_v$ used to collapse $B_v$ in the proof above boils down to the following observation. Let $\Delta(V)$ denote the simplex with vertex set $V$. If $V=\{v\}\sqcup V_1\sqcup\cdots\sqcup V_l$ then $\Delta(V)$ collapses onto the subcomplex $\Delta(\{v\}\cup V_1)\vee\cdots\vee\Delta(\{v\}\cup V_l)$. 
\end{remark}

\begin{figure}
\label{fig:5gon}
\includegraphics[scale=0.6]{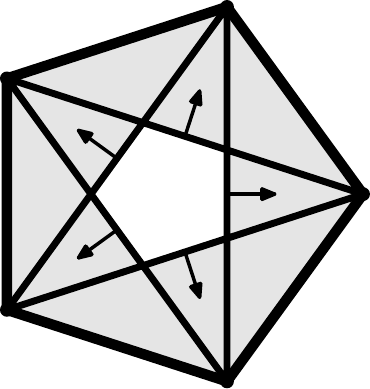}
\caption{The $5$-cycle graph $G$ (outer cycle) viewed as a cluster of its edges and the complex $N[G]$. The arrows indicate the matching from Proposition~\ref{prop:clusters-topol}.(ii). Cluster-girth $5$ ensures that for each face of $N[G]$ which spans across more than one part of the cluster there is a unique choice of the collapsing direction.}
\end{figure}

In order to construct facet-transitive complexes with prescribed homotopy types we will use the following universality property of closed neighbourhood complexes.

\begin{lemma}
\label{lem:neibhtpy}
For any finite, connected simplicial complex $K$ there exists a graph $G$ with a homotopy equivalence $N[G]\htpyequiv K$. In fact, one can take $G=(\mathrm{bd}\ K)^{(1)}$, the $1$-skeleton of the barycentric subdivision of $K$.
\end{lemma}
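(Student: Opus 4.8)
The plan is to take $G=(\mathrm{bd}\ K)^{(1)}$, the $1$-skeleton of the barycentric subdivision of $K$, and to show that the closed neighbourhood complex $N[G]$ deformation retracts onto a copy of $\mathrm{bd}\ K$, which is homeomorphic to $K$. Recall that the vertices of $\mathrm{bd}\ K$ are the faces $\sigma$ of $K$, and $\{\sigma_0,\ldots,\sigma_r\}$ spans a simplex iff the $\sigma_i$ form a chain under inclusion. In $G$, two vertices $\sigma,\tau$ are adjacent precisely when one is a proper face of the other. So $N_G[\sigma]$ consists of $\sigma$ together with all proper faces of $\sigma$ and all proper cofaces of $\sigma$. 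The faces of $N[G]$ are therefore the subsets $A$ of $V(G)$ that are contained in some $N_G[\sigma]$; equivalently, $A$ has a member that is comparable (under inclusion in $K$) to every other member of $A$. Call such a member an \emph{apex} of $A$.

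The key step is to exhibit a discrete Morse matching on $N[G]$ whose critical cells are exactly the simplices of the embedded subdivision $\mathrm{bd}\ K\subseteq N[G]$, namely the chains. First I would observe that every chain is indeed a face of $N[G]$ (its top element is an apex), so $\mathrm{bd}\ K$ is a genuine subcomplex; the claim is that $N[G]$ collapses onto it. To build the matching, for a non-chain face $A$ pick a canonical apex $a(A)$ — say the inclusion-minimal face of $K$ appearing in $A$ that is comparable to all of $A$ (when $A$ is not a chain this minimal face, if it exists, is a coface of everything else; one has to check such a canonical choice exists and is well-defined). Then match $A$ with $A\triangle\{a(A)\}$: if $a(A)\in A$ remove it, if $a(A)\notin A$ add it. A non-chain face remains non-chain after this toggle, and $a(\cdot)$ is unchanged, so the matching is an involution on non-chains pairing each with its partner; the fixed points (the critical cells) are exactly the chains. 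Acyclicity follows from the Patchwork Theorem applied to the decomposition of non-chains according to the value of the apex $a(A)$, exactly as in the proof of Proposition~\ref{prop:clusters-topol}(ii): within one block the matching is the standard ``cone off $v$'' matching on a simplex, which is acyclic, and across blocks there are no comparabilities that could create a cycle.

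The main obstacle is pinning down the canonical apex so that the matching is genuinely well-defined, closed under the toggle, and has precisely the chains as critical cells — in particular one must verify that for a non-chain face $A$ the set of apices has a distinguished element whose membership in $A$ can be freely toggled without changing the apex set, and that a face with an apex it does not contain is never a chain. I would handle this by noting that if $A$ is a face of $N[G]$ with an apex $\sigma$, then $A\setminus\{\sigma\}$ splits into the faces of $\sigma$ lying in $A$ and the proper cofaces of $\sigma$ lying in $A$; $A$ is a chain iff at most one of these two ``sides'' is nonempty and that side is itself a chain. Choosing $a(A)$ to be the minimal coface-apex when a coface side is nonempty (and otherwise the maximal face-apex) gives a choice insensitive to the toggle. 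Once the matching is verified, Discrete Morse Theory yields that $N[G]$ is homotopy equivalent to the CW-complex with one cell per chain glued as in $\mathrm{bd}\ K$, i.e.\ $N[G]\htpyequiv \mathrm{bd}\ K\cong K$, as required.
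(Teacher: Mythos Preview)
Your approach via discrete Morse theory is different from the paper's, which instead applies the nerve lemma: it covers $\cl(G)\cong\mathrm{bd}\,K$ by the closed vertex stars $\st_{\cl(G)}(\sigma)$, observes that the nerve of this cover is exactly $N[G]$, and then verifies that every nonempty intersection of stars is a cone (hence contractible), so the nerve lemma gives $N[G]\htpyequiv\cl(G)\cong K$. No matching is needed.

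Your argument, however, has a genuine error at the very first step. You claim that a subset $A\subseteq V(G)$ is a face of $N[G]$ if and only if $A$ contains a member comparable to every other member (an ``apex''). The forward implication is false: the witnessing vertex $\sigma$ with $A\subseteq N_G[\sigma]$ need not belong to $A$. Concretely, if $K$ contains the edge $\{1,2\}$, then $A=\{\{1\},\{2\}\}$ lies in $N[G]$ because $A\subseteq N_G[\{1,2\}]$, yet neither element of $A$ is comparable to the other. Such faces have no apex in your sense, so the toggling rule is undefined for them. Your later chain characterization is also incorrect: with $\sigma=\{1,2\}$ the face $\{\{1\},\{1,2\},\{1,2,3\}\}$ has both ``sides'' nonempty yet is a chain. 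Finally, even if one allows external witnesses, the ``minimal'' one is not unique (for $A=\{\{1,2,3\},\{1,2,4\}\}$ both $\{1\}$ and $\{2\}$ are inclusion-minimal witnesses), and adding a witness to $A$ can change the witness set, so the toggle is not obviously an involution. A Morse-theoretic proof may well exist, but it requires a more careful canonical choice than anything you have proposed; the paper's nerve argument sidesteps all of this.
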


This was essentially shown by Csorba \cite{csorba}, although his argument is adapted to the open neighbourhood complexes $N(G)$. We include the proof for completeness.

\begin{proof}
Let $G=(\mathrm{bd}\ K)^{(1)}$ be as above. Our notation will identify the vertices of $\cl(G)$ and the faces of $K$. Since $K$ is homeomorphic to $\cl(G)$ it suffices to show that $\cl(G)\htpyequiv N[G]$. Consider the following cover $\nicec$
$$\cl(G)=\bigcup_{\emptyset\neq \sigma\in K}\st_{\cl(G)}(\sigma)$$
of $\cl(G)$ by the collection of all vertex stars. The intersection $\st_{\cl(G)}(\sigma_1)\cap\cdots\cap \st_{\cl(G)}(\sigma_k)$ contains a vertex $\tau$ if and only if $\tau\in N_G[\sigma_1]\cap\cdots\cap N_G[\sigma_k]$, or equivalently $\sigma_1,\ldots,\sigma_k\in N_G[\tau]$. That gives an isomorphism between the nerve $\mathrm{Nrv}(\nicec)$ and $N[G]$. The proof will be completed if we can show that each nonempty intersection of the form $\st_{\cl(G)}(\sigma_1)\cap\cdots\cap \st_{\cl(G)}(\sigma_k)$ is contractible, since then the nerve lemma yields $\cl(G)\htpyequiv \mathrm{Nrv}(\nicec)\equiv N[G]$.

As $\cl(G)$ is a flag complex it suffices to show that each nonempty set of the form $V=N_G[\sigma_1]\cap\cdots\cap N_G[\sigma_k]$ induces a cone in $G$ (A cone is a graph which contains a vertex adjacent to all other vertices; the clique complex of a cone is contractible). For this suppose $\tau\in V$, that is $\sigma_1,\ldots,\sigma_k\in N_G[\tau]$ for some simplex $\emptyset\neq\tau\in K$. Set $S=\{i~:~\tau\subseteq\sigma_i\}$. For $j\not\in S$ we have $\sigma_j\subseteq\tau$. We claim that:
\begin{itemize}
\item[a)] if $S=\emptyset$ then $V$ induces a cone with apex $\widetilde{\sigma}=\bigcup_{i=1}^k\sigma_i$.
\item[b)] if $S\neq\emptyset$ then $V$ induces a cone with apex $\widetilde{\sigma}=\bigcap_{i\in S} \sigma_i$.
\end{itemize}
We need to check that $\widetilde{\sigma}\in V$ and that $\widetilde{\sigma}$ is adjacent to all vertices of $V$. For this, let $\tau'$ be any element of $V$, that is assume $\sigma_1,\ldots,\sigma_k\in N_G[\tau']$. 

In a), $\widetilde{\sigma}$ is a face of $K$ since $\widetilde{\sigma}\subseteq\tau$. We have $\widetilde{\sigma}\in V$ since $\sigma_i\subset\widetilde{\sigma}$ for all $i$. Now if $\tau'\subseteq\sigma_i$ for some $i$ then $\tau'\subseteq\widetilde{\sigma}$. Otherwise $\sigma_i\subseteq\tau'$ for all $i$ and then $\widetilde{\sigma}\subseteq\tau'$.

In b), $\widetilde{\sigma}$ is nonempty since it contains $\tau$. We have $\widetilde{\sigma}\in V$ since $\sigma_j\subset\tau\subseteq\widetilde{\sigma}$ for all $j\not\in S$ and $\widetilde{\sigma}\subseteq\sigma_i$ for all $i\in S$. Now if $\sigma_i\subseteq\tau'$ for some $i\in S$ then $\widetilde{\sigma}\subseteq\tau'$. Otherwise $\tau'\subseteq\sigma_i$ for all $i\in S$ and then $\tau'\subseteq\widetilde{\sigma}$.

\end{proof}

\section{Complexes arising from Cayley graphs}
\label{sect:cliques}
This section culminates with Proposition~\ref{cor:cliquesok}, which is a recipe for clustering copies of the same complex into a clique complex or neighbourhood complex (up to homotopy) of some Cayley graph. To construct that Cayley graph we first need a group. We introduce the following notational convention: a pair $(\Gamma,D)$ consists of a finite group $\Gamma$, with neutral element $e$, together with a distinguished set $D=\{\gamma_1,\ldots,\gamma_d\}\subseteq\Gamma$ (the elements in $D$ are not required to generate $\Gamma$). The cardinality of $D$ is denoted by $|D|=d$. Typically $d$ will be small compared to $|\Gamma|$.

\smallskip
By comparison with Section~\ref{sect:simple} the object we now define could be referred to as a ``generalized Golomb ruler in the group $\Gamma$''.
\begin{definition}
\label{def:r}
We say a pair $(\Gamma,D)$ as above satisfies condition $\mathcal{R}(p)$, for $p\geq 1$, if for every sequence of indices $i_0,\ldots,i_{2p-1}\in\{1,\ldots,d\}$ the equality
$$\gamma_{i_0}{\gamma_{i_1}}^{-1}\gamma_{i_2}{\gamma_{i_3}}^{-1}\cdots\gamma_{i_{2p-2}}{\gamma_{i_{2p-1}}}^{-1}=e$$
implies $i_l=i_{l+1}$ for some $l=0,\ldots,2p-1$ (indices modulo $2p$). We further say $(\Gamma,D)$ satisfies condition $\mathcal{G}(p)$ if it satisfies $\mathcal{R}(1),\ldots,\mathcal{R}(p)$.
\end{definition}

We will now show how to glue together copies of a given graph $H$ using a suitable pair $(\Gamma,D)$.
\begin{definition}
\label{def:htilde}
Given a connected graph $H$ on $d$ nodes $v_1,\ldots,v_d$ and a pair $(\Gamma,D)$ with $|D|=d$ we construct a $\Gamma$-vertex-transitive graph $\overh_{\Gamma,D}$ as follows. The vertex set of $\overh_{\Gamma,D}$ is $V(\overh_{\Gamma,D})=\Gamma$. The edge set is
$$E(\overh_{\Gamma,D})=\big\{(\gamma\gamma_i,\gamma\gamma_j)~:~\gamma\in\Gamma,\ v_iv_j\in E(H)\big\}.$$
\end{definition}

\begin{proposition}
\label{prop:overhiscluster}
Suppose $H$ is a connected graph with $d$ vertices and $(\Gamma,D)$ is a pair with $|D|=d$.
\begin{itemize}
\item[a)] If $(\Gamma,D)$ satisfies condition $\mathcal{G}(2)$ then the graph $\overh_{\Gamma,D}$ is a cluster of $|\Gamma|$ copies of $H$. 
\item[b)] If $(\Gamma,D)$ satisfies $\mathcal{G}(p)$ then the cluster-girth of $\overh_{\Gamma,D}$ is at least $p+1$.
\item[c)] The connected components of $\overh_{\Gamma,D}$ are isomorphic. The component of the identity is a Cayley graph of some subgroup of $\Gamma$.
\end{itemize}
\end{proposition}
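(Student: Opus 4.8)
The plan rests on one observation: in $\overh_{\Gamma,D}$ two vertices $x,y\in\Gamma$ are adjacent if and only if $x^{-1}y$ lies in the set $S=\{\gamma_i^{-1}\gamma_j:v_iv_j\in E(H)\}$, which is automatically closed under inverses because $H$ is undirected; equivalently, $\overh_{\Gamma,D}$ is the Cayley graph $\mathrm{Cay}(\Gamma,S)$. The natural candidates for the parts of the cluster are the induced subgraphs $H_\gamma:=\overh_{\Gamma,D}[\gamma D]$ for $\gamma\in\Gamma$, and condition $\mathcal{R}(1)$, which is contained in $\mathcal{G}(2)$, makes $v_i\mapsto\gamma\gamma_i$ a bijection from $V(H)$ onto $\gamma D$.

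For a) I would verify the three requirements of Definition~\ref{def:cluster}. Every edge $(\delta\gamma_i,\delta\gamma_j)$ of $\overh_{\Gamma,D}$ lies in $H_\delta$ and every vertex $w$ lies in $H_{w\gamma_1^{-1}}$, so $\overh_{\Gamma,D}=\bigcup_\gamma H_\gamma$. If $\gamma\neq\gamma'$ and $\gamma D\cap\gamma' D$ contained two distinct vertices $p=\gamma\gamma_i=\gamma'\gamma_{i'}$ and $q=\gamma\gamma_j=\gamma'\gamma_{j'}$, then eliminating $\gamma^{-1}\gamma'$ gives $\gamma_i\gamma_{i'}^{-1}\gamma_{j'}\gamma_j^{-1}=e$, a word of the type occurring in $\mathcal{R}(2)$; each of the four possible forced index equalities amounts to $\gamma=\gamma'$ or $p=q$, a contradiction, so parts meet pairwise in at most one vertex. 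Finally $v_i\mapsto\gamma\gamma_i$ is a graph isomorphism $H\to H_\gamma$: it preserves edges by construction, and an edge of $H_\gamma$ written two ways as $(\gamma\gamma_i,\gamma\gamma_j)=(\delta\gamma_k,\delta\gamma_l)$ with $v_kv_l\in E(H)$ again produces an $\mathcal{R}(2)$-relation, forcing $\gamma=\delta$ and $\{i,j\}=\{k,l\}$, hence $v_iv_j\in E(H)$. This exhibits $\overh_{\Gamma,D}$ as a cluster of $|\Gamma|$ copies of $H$ whose shape is the hypergraph $\{\gamma D:\gamma\in\Gamma\}$.

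For b), assume $\mathcal{G}(p)$ and suppose toward a contradiction that $u_0,H_{\delta_0},u_1,\ldots,u_{l-1},H_{\delta_{l-1}},u_0$ is a cluster-cycle of length $l$ with $3\le l\le p$ (all $u_j$ pairwise distinct, all $\delta_j$ pairwise distinct, $u_j\in\delta_{j-1}D\cap\delta_jD$, indices mod $l$). Write $u_j=\delta_{j-1}\gamma_{s_j}=\delta_j\gamma_{t_j}$, so $\delta_{j-1}^{-1}\delta_j=\gamma_{s_j}\gamma_{t_j}^{-1}$; multiplying these equalities around the cycle telescopes the left-hand sides to $e$ and yields
$$\gamma_{s_0}\gamma_{t_0}^{-1}\gamma_{s_1}\gamma_{t_1}^{-1}\cdots\gamma_{s_{l-1}}\gamma_{t_{l-1}}^{-1}=e,$$
a relation with $2l$ factors of exactly the form tested by $\mathcal{R}(l)$. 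Since $l\le p$, condition $\mathcal{R}(l)$ forces two cyclically consecutive entries of the sequence $(s_0,t_0,s_1,t_1,\ldots,s_{l-1},t_{l-1})$ to coincide; but $s_m=t_m$ gives $\delta_{m-1}^{-1}\delta_m=e$, i.e.\ $\delta_{m-1}=\delta_m$, while $t_m=s_{m+1}$ gives $u_m=\delta_m\gamma_{t_m}=\delta_m\gamma_{s_{m+1}}=u_{m+1}$, each contradicting a distinctness hypothesis. Hence there is no cluster-cycle of length at most $p$, so the cluster-girth is at least $p+1$. I expect this to be the delicate step: one must telescope in the right cyclic order so that the resulting word literally matches the form in Definition~\ref{def:r}, and then correctly pair each of the $2l$ outcomes of $\mathcal{R}(l)$ with a contradiction. (It is also worth noting that a) genuinely needs $\mathcal{R}(2)$, not merely $\mathcal{R}(1)$.)

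For c), left translation $x\mapsto gx$ is a graph automorphism of $\overh_{\Gamma,D}$ because $x^{-1}y\in S\iff(gx)^{-1}(gy)\in S$, and $\Gamma$ acts this way transitively on vertices, hence transitively on connected components; so any two components are carried onto one another by an automorphism and in particular are isomorphic. The component of $e$ has vertex set $W$: it contains $S$ (each $s\in S$ is adjacent to $e$) and it is closed under products and under inverses, since a walk from $e$ to $x$ exhibits $x$ as a product of elements of $S$ and $S=S^{-1}$; therefore $W$ is a subgroup of $\Gamma$. Because $S\subseteq W$ and $W$ is closed under multiplication, the induced subgraph $\overh_{\Gamma,D}[W]$ has edge set $\{(x,xs):x\in W,\ s\in S\}$, i.e.\ it is the Cayley graph $\mathrm{Cay}(W,S)$, proving c).
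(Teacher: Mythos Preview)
Your proof is correct and follows essentially the same approach as the paper: you use the same parts $H_\gamma=\overh_{\Gamma,D}[\gamma D]$, derive the same $\mathcal{R}(2)$-relation for the pairwise intersection argument in a), telescope around a putative cluster-cycle to obtain an $\mathcal{R}(l)$-relation for b), and identify $\overh_{\Gamma,D}$ with $\mathrm{Cay}(\Gamma,S)$ for $S=\{\gamma_i^{-1}\gamma_j:v_iv_j\in E(H)\}$ to handle c). Your write-up is in fact slightly more explicit than the paper's (you check all four outcomes of $\mathcal{R}(2)$ rather than appealing to symmetry, and you spell out why the induced subgraph $H_\gamma$ really is isomorphic to $H$ and why the identity component is a subgroup), but the underlying argument is the same.
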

\begin{proof}
For simplicity we denote $\overh:=\overh_{\Gamma,D}$. For any $\gamma\in \Gamma$ let $\overh_\gamma$ be the subgraph of $\overh$ induced by the vertex set $\overv_\gamma=\{\gamma\gamma_i~:~i=1,\ldots,d\}$. 

Part a). We show that $\overh$ is a cluster of $\overh_\gamma$ for $\gamma\in\Gamma$. Clearly $\overh=\bigcup_\gamma\overh_\gamma$. Suppose the intersection of $\overv_x$ and $\overv_y$ for $x\neq y$ contains two vertices $p\neq q$. Then
$$p=x\gamma_i=y\gamma_{i'},\ q=y\gamma_j=x\gamma_{j'}$$
for some $i,i',j,j'$. It follows that
$$\gamma_i{\gamma_{i'}}^{-1}\gamma_{j}{\gamma_{j'}}^{-1}=x^{-1}yy^{-1}x=e.$$
By $\mathcal{R}(2)$ we conclude that, without loss of generality, either $i=i'$, which gives $x=y$, or $i=j'$, which gives $p=q$. In either case we have a contradiction, so the cluster claim is proved. 

This also implies that each graph $\overh_\gamma$ is isomorphic to $H$ and the proof of a) is finished.

\smallskip
Part b). Suppose $v_0,\overh_{\tau_0},v_1,\ldots,v_{k-1},\overh_{\tau_{k-1}},v_0$ is a cycle such that $v_l\in V(\overh_{\tau_{l-1}})\cap  V(\overh_{\tau_{l}})$ for $l=0,\ldots,k-1$, with all $v_0,\ldots,v_{k-1}\in \Gamma$ pairwise distinct and $\tau_0,\ldots,\tau_{k-1}\in\Gamma$ pairwise distinct. There exists elements $\gamma_{i_l},\gamma_{i_l'}\in D$, $l=0,\ldots,k-1$, for which
$$v_l=\tau_{l-1}\gamma_{i_l}=\tau_l\gamma_{i_l'},\ \ l=0,\ldots,k-1.$$
Then
$$\gamma_{i_0}{\gamma_{i_0'}}^{-1}\cdots\gamma_{i_{k-1}}{\gamma_{i_{k-1}'}}^{-1}=\tau_{k-1}^{-1}\tau_0\tau_0^{-1}\tau_1\cdots\tau_{k-2}^{-1}\tau_{k-1}=e.$$
If $k\leq p$ then $\mathcal{R}(k)$ yields, without loss of generality, either $i_0=i_0'$, but then $\tau_0=\tau_{k-1}$ or $i_0=i_{k-1}'$, and then $v_0=v_{k-1}$. In either case we have a contradiction which shows that $k\geq p+1$.

\smallskip
Part c). By Definition~\ref{def:htilde}, there is an edge from $x$ to $y$ in $\overh$ if and only if $x^{-1}y\in S$, where
$$S=\{ \gamma_i^{-1}\gamma_j~:~i,j=1,\ldots,d,\ v_iv_j\in E(H)\}.$$
It means that $\overh$ is the (possibly disconnected) Cayley graph of $\Gamma$ with respect to the (possibly non-generating) set $S$. The connected component of identity in $\overh$ is the Cayley graph of the subgroup $\Gamma'=\langle S\rangle\subseteq\Gamma$ with respect to $S$.
\end{proof}

The existence of pairs $(\Gamma,D)$ which satisfy conditions $\mathcal{G}(p)$ can be easily deduced from the relatively well-known fact that there exist finite groups whose Cayley graphs have arbitrarily high degree and girth. The details are given in the next proposition.

\begin{proposition}
\label{prop:groupsexist}
For every $p\geq 2$, $d\geq 3$ there exists a pair $(\Gamma,D)$ with $|D|=d$ which satisfies condition $\mathcal{G}(p)$.
\end{proposition}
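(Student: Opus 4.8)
The plan is to realize the pair $(\Gamma,D)$ inside a group whose Cayley graph has large girth, so that the condition $\mathcal{G}(p)$ — which is really a statement about short closed walks with alternating forward/backward steps — becomes the statement that there are no short cycles. Concretely, I would first recall the classical fact (going back to Erd\H{o}s--Sachs, and realizable e.g.\ by Ramanujan graphs or by the explicit constructions of Margulis) that for every $\Delta$ and every $g$ there exists a finite group $\Gamma$ together with a symmetric generating set $T=T^{-1}$, $e\notin T$, with $|T|=\Delta$, such that the Cayley graph $\mathrm{Cay}(\Gamma,T)$ has girth at least $g$. I would apply this with $\Delta=2d$ (or a bit more, to have room) and $g = 2p+1$, obtaining a group $\Gamma$ and a generating set $T=\{t_1,t_1^{-1},\ldots,t_d,t_d^{-1}\}$ of $2d$ distinct non-identity elements whose Cayley graph has girth $>2p$.

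Next I would choose the distinguished set. Set $\gamma_1 = e$ and $\gamma_{i+1} = t_i$ for $i=1,\ldots,d-1$, so that $D=\{e,t_1,\ldots,t_{d-1}\}$ has exactly $d$ elements (they are distinct because $e,t_1,\ldots,t_{d-1}$ are distinct). The point of including $e$ is bookkeeping: a word $\gamma_{i_0}\gamma_{i_1}^{-1}\gamma_{i_2}\gamma_{i_3}^{-1}\cdots\gamma_{i_{2p-2}}\gamma_{i_{2p-1}}^{-1}$ in which some $\gamma_{i_l}=\gamma_{i_{l+1}}$ for no pair of consecutive indices is, after inserting the $e$'s, a product of at most $2p$ letters each of which is some $t_j^{\pm1}$, and one checks that it corresponds to a closed walk in $\mathrm{Cay}(\Gamma,T)$ of length at most $2p$. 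If that product equals $e$ without any cancellation forced by $i_l=i_{l+1}$, then after deleting the inserted identities and any adjacent inverse pairs $t_j t_j^{-1}$ that arise, we are left with a \emph{reduced} nonempty closed walk of length at most $2p$, i.e.\ a cycle of length $\le 2p$ in the Cayley graph — contradicting the girth bound. Hence every such vanishing word must have $i_l=i_{l+1}$ for some $l$, which is exactly $\mathcal{R}(k)$ for all $k\le p$, that is, $\mathcal{G}(p)$.

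The step that needs the most care, and which I expect to be the main obstacle, is the reduction from ``the group word equals $e$ with no consecutive equal indices'' to ``there is a short cycle in the Cayley graph.'' The subtlety is cancellation: if $i_l=i_{l+1}$ is forbidden, it does \emph{not} immediately prevent cancellations in the word after the $e$'s are inserted, because the letter coming from $\gamma_{i_l}$ sits next to the letter from $\gamma_{i_{l+1}}$ in a specific way (one appears as $\gamma_{i_l}$ and the next as $\gamma_{i_{l+1}}^{-1}$, or vice versa), and for instance $\gamma_{i_l}\gamma_{i_{l+1}}^{-1}=e$ would force $\gamma_{i_l}=\gamma_{i_{l+1}}$, which is excluded — so adjacent cancellation of exactly those two letters cannot happen. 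One must argue that, more generally, reducing the length-$\le 2p$ walk can only shorten it, and if it reduces all the way to the trivial walk then backtracking shows some pair of consecutive $\gamma$'s coincided. The cleanest way to organize this is to not reduce at all: simply observe that the sequence of partial products $e, \gamma_{i_0}, \gamma_{i_0}\gamma_{i_1}^{-1}, \ldots$ traces a closed walk of length $2p$ in $\mathrm{Cay}(\Gamma,T)$ once each step $\gamma_{i_{2m}}$ resp.\ $\gamma_{i_{2m+1}}^{-1}$ is itself expanded into the (at most one) generator it equals; a closed walk of positive length at most $2p$ in a graph of girth $>2p$ must have a backtracking step, i.e.\ two consecutive edges that coincide, which translates back into $\gamma_{i_l}=\gamma_{i_{l+1}}$ for some $l$ — precisely the conclusion of $\mathcal{G}(p)$. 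Finally, if after all this one wants $D$ to have a cleaner description one may instead take $\Delta$ slightly larger and pick $D$ to be any $d$ distinct elements of $T\cup\{e\}$; the argument is identical.
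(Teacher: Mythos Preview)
Your approach is essentially the paper's: both reduce $\mathcal{G}(p)$ to the existence of finite Cayley graphs of arbitrarily large girth. The paper invokes Biggs' construction of a finite group generated by $d$ \emph{involutions} $\gamma_1,\ldots,\gamma_d$ in which every cyclically reduced word of length $\leq 2p$ is nontrivial. Because $\gamma_i^{-1}=\gamma_i$, the alternating product $\gamma_{i_0}\gamma_{i_1}^{-1}\cdots\gamma_{i_{2k-1}}^{-1}$ is literally a positive word of length $2k$ in the generators, and ``cyclically reduced'' is exactly the negation of ``$i_l=i_{l+1}$ for some $l$''. The whole verification is one line.

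Your version also works, but including $e$ in $D$ is an unnecessary complication that leaves a small gap. With $e\in D$ you delete the trivial steps and then assert that any backtrack in the shortened walk ``translates back into $\gamma_{i_l}=\gamma_{i_{l+1}}$''. That is not quite argued: when an $e$ sat at position $l{+}1$, the letters at positions $l$ and $l{+}2$ become adjacent, and they carry the \emph{same} exponent sign, so they would cancel only if some $t_a=t_b^{-1}$ --- impossible by your hypothesis $|T|=2d$, but you never say so. The fix is either to add that sentence, or better, to drop $e$ and take $D=\{t_1,\ldots,t_d\}\subset T$ directly: then the word has length exactly $2k$, adjacent letters have opposite signs, and they cancel iff $i_l=i_{l+1}$, giving a cyclically reduced word immediately. (Also: Erd\H{o}s--Sachs produces high-girth graphs, not Cayley graphs; for the Cayley statement cite Margulis, Lubotzky--Phillips--Sarnak, or Biggs.)
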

\begin{proof}
By a result of Biggs \cite[Sect. 4]{biggs} (see also \cite[Theorem 19]{survey}) for any $d\geq 3$ and any $p$ there exist a finite group $\Gamma=\langle\gamma_1,\ldots,\gamma_d\rangle$, generated by $d$ involutions $\gamma_1,\ldots,\gamma_d$ and such that any cyclically reduced word of length at most $2p$ in the generators $\gamma_i$ gives a nontrivial element of $\Gamma$. Since in this case $\gamma_i^{-1}=\gamma_i$, this implies that the pair $(\Gamma, \{\gamma_1,\ldots,\gamma_d\})$ satisfies $\mathcal{G}(p)$.
\end{proof}

It is now a matter of putting all the pieces together to derive the main result.

\begin{proposition}
\label{cor:cliquesok}
For any finite, connected simplicial complex $K$ there exists an integer $n\geq 1$ and an $n$-vertex Cayley graph $G$ which satisfies homotopy equivalences
\begin{equation*}
\textstyle N[G]\htpyequiv\cl(G)\htpyequiv \bigvee^n K\vee\bigvee^l S^1
\end{equation*}
for some $l\geq 0$.
\end{proposition}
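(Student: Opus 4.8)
The plan is to feed the universality statement of Lemma~\ref{lem:neibhtpy} into the group-theoretic clustering construction of this section and then transport the resulting cluster structure through both topological constructions at once. Set $H:=(\mathrm{bd}\,K)^{(1)}$; this is a connected graph, by Lemma~\ref{lem:neibhtpy} we have $N[H]\htpyequiv K$, and $\cl(H)$ is homeomorphic to $K$ (as recorded in the proof of that lemma), so $\cl(H)$ and $N[H]$ are both connected and homotopy equivalent to $K$. If $d:=|V(H)|<3$ then $K$ is a single vertex, which is handled directly (take $G$ a single vertex, $n=1$, $l=0$), so from now on assume $d\geq 3$.

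Next I would apply Proposition~\ref{prop:groupsexist} with $p=4$ to produce a pair $(\Gamma,D)$ with $|D|=d$ satisfying $\mathcal{G}(4)$, and form $\overh:=\overh_{\Gamma,D}$ as in Definition~\ref{def:htilde}. By Proposition~\ref{prop:overhiscluster} the graph $\overh$ is a cluster of $|\Gamma|$ copies of $H$ with cluster-girth at least $5$, and its connected component $G$ at the identity is a Cayley graph of a subgroup $\Gamma'\subseteq\Gamma$, on $n:=|\Gamma'|$ vertices. A connected component of a cluster is again a cluster, so $G$ is itself a cluster, namely of those parts $\overh_\gamma$ it contains; since each $\overh_\gamma$ is connected, $\overh_\gamma\subseteq G$ holds exactly when $\gamma\gamma_1\in V(G)=\Gamma'$, i.e. for $n$ values of $\gamma$, and these $n$ parts are pairwise distinct because $d\geq 2$ forces distinct $\gamma$ to have distinct vertex sets $\overv_\gamma$. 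Thus $G$ is a cluster of $n$ copies of $H$ (in particular $n\geq 1$), and its cluster-girth remains at least $5$ since any cluster-cycle in $G$ is also one in $\overh$.

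It remains to pass to the complexes. Since the cluster-girth is at least $5$, Proposition~\ref{prop:clusters-topol} applies: part~(i) gives that $\cl(G)$ is isomorphic to a cluster of $n$ copies of $\cl(H)$ of the same shape as $G$, and part~(ii) gives that $N[G]$ is homotopy equivalent to a cluster of $n$ copies of $N[H]$, again of the same shape as $G$. Applying Fact~\ref{fact:union} to each of these clusters and replacing the parts $\cl(H)$, $N[H]$ by the homotopy equivalent $K$ yields $\cl(G)\htpyequiv\bigvee^n K\vee\bigvee^{l}S^1$ and $N[G]\htpyequiv\bigvee^n K\vee\bigvee^{l'}S^1$. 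Since the number of circles produced by Fact~\ref{fact:union} depends only on the shape of the cluster, and both clusters have the same shape as $G$, we conclude $l=l'$; together with the fact that $G$ is a Cayley graph (Proposition~\ref{prop:overhiscluster}(c)) this is the required statement.

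All the genuine content sits in Propositions~\ref{prop:overhiscluster}, \ref{prop:groupsexist} and~\ref{prop:clusters-topol}, so the remaining points in the write-up are bookkeeping: ensuring $d\geq 3$ (hence the separate one-vertex case), checking that the cluster $G$ has as many parts as vertices, and --- the one place I expect to need real care --- verifying that the two circle-counts $l,l'$ agree, which works precisely because Proposition~\ref{prop:clusters-topol} forces both clusters to carry the same shape, that of $G$.
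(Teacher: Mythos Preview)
Your proposal is correct and follows essentially the same route as the paper: take $H=(\mathrm{bd}\,K)^{(1)}$, choose $(\Gamma,D)$ satisfying $\mathcal{G}(4)$, pass to the identity component $G$ of $\widetilde{H}_{\Gamma,D}$, apply Proposition~\ref{prop:clusters-topol} to both $\cl$ and $N[\cdot]$, then invoke Fact~\ref{fact:union} and Lemma~\ref{lem:neibhtpy}, using equality of shapes to identify the two circle-counts. You are in fact slightly more careful than the paper in two places: you handle the degenerate case $d<3$ (needed because Proposition~\ref{prop:groupsexist} assumes $d\geq 3$), and you spell out why the component $G$ is a cluster with exactly $n=|\Gamma'|$ parts.
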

As explained before that implies Theorem~\ref{thm:super-main}: part (ii) follows since $G$ is a Cayley graph and part (iii) since $N[G]$ is both vertex- and facet-transitive.
\begin{proof}
Let $H=(\mathrm{bd} K)^{(1)}$ be the $1$-skeleton of the barycentric subdivision of $K$. Choose any pair $(\Gamma,D)$ which satisfies $\mathcal{G}(4)$ and such that $|D|=|V(H)|$. This is possible by Proposition~\ref{prop:groupsexist}. Let $G$ be the connected component of $\overh_{\Gamma,D}$ containing the identity\footnote{In Section~\ref{sect:simple} we had an additional condition which guaranteed connectedness of the construction, but repeating that in the context of an arbitrary group $\Gamma$ would only obscure the proof. It is easier to consider the component of identity instead.}. By Proposition~\ref{prop:overhiscluster} the graph $G$ is a Cayley graph of some subgroup $\Gamma'\subseteq \Gamma$, it has $\frac{|\Gamma|}{[\Gamma:\Gamma']}=|\Gamma'|$ vertices and it is a cluster of $|\Gamma'|$ copies of $H$ with cluster-girth at least $5$. By Proposition~\ref{prop:clusters-topol} we have that 
\begin{itemize}
\item[a)] $\cl(G)$ is a cluster of copies of $\cl(H)$,
\item[b)] $N[G]$ is homotopy equivalent to a cluster of copies of $N[H]$.
\end{itemize}
From Fact~\ref{fact:union} and Lemma~\ref{lem:neibhtpy} we get:
\begin{align*}
\textstyle\cl(G)&\htpyequiv\textstyle \bigvee^n\cl(H)\vee\bigvee^l S^1\htpyequiv \bigvee^n K\vee\bigvee^l S^1\\
N[G]& \htpyequiv\textstyle \bigvee^nN[H]\vee\bigvee^lS^1\htpyequiv \bigvee^n K\vee\bigvee^l S^1,
\end{align*}
where $n=|V(G)|=|\Gamma'|$ and $l\geq 0$. The values of $n,l$ are the same for both decompositions since the clusters mentioned in a) and b) both have the same shape as $G$.
\end{proof}

As we see from the proof, in order to obtain Theorem~\ref{thm:super-main} one does not require the high girth construction of Proposition~\ref{prop:groupsexist} in its full generality. We only need pairs $(\Gamma,D)$ which satisfy condition $\mathcal{G}(4)$ (or just $\mathcal{G}(3)$ to prove Theorem~\ref{thm:super-main}.(ii)). We provide such pairs in the next proposition. This will make the argument self-contained and also demonstrate that the group whose action is transitive can be chosen to be fairly simple and explicit.

We say a sequence $a_1<\ldots<a_d$ of integers is \emph{(arithmetic) progression-free} if $a_i+a_j=2a_k$ for $1\leq i,j,k\leq d$ implies $i=j=k$. 

\begin{proposition}
\label{prop:groupeasy}
For any $i=2,\ldots,d+1$ let $\sigma_i\in \Sigma_{d+1}$ be the transposition $(1\ i)$. Let $0<a_1<\ldots<a_d$ be a progression-free sequence and choose any $m_d>4a_d$.

Consider the group $$\Gamma_d=\Sigma_{d+1}\oplus \zet/m_d$$ together with elements
$$\gamma_i=(\sigma_{i+1},\ a_i),\ i=1,\ldots,d.$$
Then $(\Gamma_d,\{\gamma_1,\ldots,\gamma_d\})$ satisfies $\mathcal{G}(4)$.
\end{proposition}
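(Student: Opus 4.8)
My plan is to use the direct-sum structure $\Gamma_d=\Sigma_{d+1}\oplus\zet/m_d$ to split the defining equation of $\mathcal{R}(p)$ into two independent pieces. Since each $\sigma_j=(1\ j)$ is an involution we have $\gamma_i^{-1}=(\sigma_{i+1},-a_i)$, so
$$\gamma_{i_0}\gamma_{i_1}^{-1}\gamma_{i_2}\cdots\gamma_{i_{2p-1}}^{-1}=\left(\sigma_{i_0+1}\sigma_{i_1+1}\cdots\sigma_{i_{2p-1}+1}\,,\ \sum_{l=0}^{2p-1}(-1)^l a_{i_l}\right),$$
which equals $e$ if and only if (A) $\sigma_{i_0+1}\sigma_{i_1+1}\cdots\sigma_{i_{2p-1}+1}=\mathrm{id}$ in $\Sigma_{d+1}$, and (B) $\sum_{l}(-1)^l a_{i_l}\equiv 0\pmod{m_d}$. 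The alternating sum in (B) is a difference of two sums of $p$ elements of $\{a_1,\dots,a_d\}\subset(0,a_d]$, so its absolute value is $<p\,a_d\le 4a_d<m_d$ for every $p\le 4$; hence for $\mathcal{G}(4)$, condition (B) is the honest integer identity $\sum_{l\text{ even}}a_{i_l}=\sum_{l\text{ odd}}a_{i_l}$.

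Next I would read (A) as the statement that $\sigma_{i_0+1},\dots,\sigma_{i_{2p-1}+1}$ spell a closed walk of length $2p$ in the Cayley graph of $\Sigma_{d+1}$ with generating set $\{\sigma_2,\dots,\sigma_{d+1}\}$ — the star graph, which is bipartite (by the sign homomorphism) and has girth $6$. Suppose, towards a contradiction, that $i_l\ne i_{l+1}$ for all $l$ modulo $2p$, so that this walk is non-backtracking. If $p\in\{1,2\}$ then its length $2p\le 4$ is below the girth, so no such walk exists, and $\mathcal{R}(1),\mathcal{R}(2)$ follow from (A) alone. If $p=3$, a non-backtracking closed walk of length $6$ is a hexagon, and I would verify that every hexagon of the star graph is a relator $(\sigma_a\sigma_b)^3$ with $a\ne b$ (by a trajectory-of-$1$ argument: in a cyclically reduced relator every index occurs at least twice and the number of excursions of $1$ away from itself is bounded, which leaves only the two-index alternating word). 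Then $(i_0,\dots,i_5)$ alternates between two distinct indices $\alpha\ne\beta$, so the two sides of (B) differ by $3(a_\alpha-a_\beta)\ne 0$ — a contradiction, and $\mathcal{R}(3)$ holds.

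The case $p=4$ is where the work concentrates. By the same girth/bipartiteness bookkeeping a non-backtracking closed walk of length $8$ must be a simple octagon, and the excursion count leaves exactly two multiplicity patterns for its cyclically reduced relator: $(4,2,2)$ and $(3,3,2)$. A short direct computation then shows, up to cyclic shifts and reversal, that such a relator is (in the first case) one index $u$ occupying alternate slots with two distinct indices $v\ne w$ alternating in the remaining slots, and (in the second case) uses a twice-occurring index $t$ together with two distinct thrice-occurring indices $r,s$ in a specific arrangement. Since $\sum_{l\text{ even}}a_{i_l}-\sum_{l\text{ odd}}a_{i_l}$ is unchanged by an even shift and negated by an odd shift or a reversal, it suffices to evaluate it on one representative of each relator: one gets $\pm2(2a_u-a_v-a_w)$ and $\pm(2a_t-a_r-a_s)$ respectively. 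In both cases (B) would read $2a_x=a_y+a_z$ with $y\ne z$, contradicting the progression-free property of $a_1<\dots<a_d$. Hence (A) and (B) cannot both hold unless some $i_l=i_{l+1}$; this is $\mathcal{R}(4)$, and together with the previous paragraph it establishes $\mathcal{G}(4)$.

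I expect essentially all of the difficulty to sit in the length-$8$ classification invoked above: showing the pattern $(2,2,2,2)$ never yields a relator, and that within $(4,2,2)$ and $(3,3,2)$ no cyclic arrangement other than the ones described multiplies to the identity. This is a finite case analysis, kept tractable by the excursion lemma (which bounds how often the orbit of $1$ returns to $1$) and by the fact that only relators of length at most $8$ are relevant, of which there are very few. Everything else — the (A)/(B) split, the girth facts for the star graph, and the arithmetic with the progression-free set — is routine.
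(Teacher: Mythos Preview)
Your proposal is correct and follows essentially the same line as the paper's proof: split the identity in $\Gamma_d=\Sigma_{d+1}\oplus\zet/m_d$ into the permutation part and the arithmetic part, classify the cyclically reduced length-$\le 8$ relators in the star generators $\sigma_2,\ldots,\sigma_{d+1}$, and eliminate the exceptional families via the progression-free hypothesis. The only notable difference is methodological: the paper simply asserts the length-$4$, $6$, $8$ classifications (your (A)) and states they were verified by a short computer search, whereas you outline a by-hand argument using the girth-$6$/bipartite structure of the star Cayley graph and an ``excursion of $1$'' count; the resulting exceptional patterns $(i,j,i,j,i,j)$, $(i,j,i,k,i,j,i,k)$, $(i,j,i,k,j,i,j,k)$ and the ensuing equations $3(a_i-a_j)=0$, $2(2a_i-a_j-a_k)=0$, $a_i+a_j-2a_k=0$ coincide exactly with the paper's.
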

As will be clear from the proof, there are many gadgets one could use in place of the factor $\zet/m_d$ in order to achieve the same result.
\begin{proof}
We first make the following assertions about compositions of transpositions in $\Sigma_{d+1}$.
\begin{eqnarray}
\label{eq:sigma1}\sigma_{i_0}\sigma_{i_1}\sigma_{i_2}\sigma_{i_3}=\textrm{id}&\Longrightarrow&i_l=i_{l+1}\ \textrm{for some}\ l=0,\ldots,3. \\
\label{eq:sigma2}\sigma_{i_0}\sigma_{i_1}\sigma_{i_2}\sigma_{i_3}\sigma_{i_4}\sigma_{i_5}=\textrm{id}&\Longrightarrow&i_l=i_{l+1}\ \textrm{for some}\ l=0,\ldots,5\ \mathrm{or}\\
\nonumber & &\quad (i_0,\ldots,i_5)=(i,j,i,j,i,j).\\
\label{eq:sigma3}\sigma_{i_0}\sigma_{i_1}\sigma_{i_2}\sigma_{i_3}\sigma_{i_4}\sigma_{i_5}\sigma_{i_6}\sigma_{i_7}=\textrm{id}&\Longrightarrow&i_l=i_{l+1}\ \textrm{for some}\ l=0,\ldots,7\ \mathrm{or}\\
\nonumber & &\quad (i_0,\ldots,i_7)\ \textrm{is one of the sequences}\\
\nonumber & &\qquad (i,j,i,k,i,j,i,k),\\
\nonumber & &\qquad (i,j,i,k,j,i,j,k),\\
\nonumber & &\quad \textrm{or their cyclic shifts.}
\end{eqnarray}
This can be checked directly, preferably (as the author did) with the aid of a short computer program.

Now we can prove the proposition. If $\gamma_{i_0}{\gamma_{i_1}}^{-1}\gamma_{i_2}{\gamma_{i_3}}^{-1}=e$ then $\sigma_{i_0}\sigma_{i_1}\sigma_{i_2}\sigma_{i_3}=\mathrm{id}$ and by \eqref{eq:sigma1} we conclude that $\mathcal{R}(2)$ holds.

Next, suppose that $\gamma_{i_0}{\gamma_{i_1}}^{-1}\gamma_{i_2}{\gamma_{i_3}}^{-1}\gamma_{i_4}{\gamma_{i_5}}^{-1}=e$. By \eqref{eq:sigma2} either $i_l=i_{l+1}$ for some $l=0,\ldots,5$, and then $\mathcal{R}(3)$ holds, or $(i_0,\dots,i_5)=(i,j,i,j,i,j)$. In the latter case we have $3(a_i-a_j)\equiv 0\pmod{m_d}$. Since $-m_d<3(a_i-a_j)<m_d$ that implies $i=j$ and we also proved $\mathcal{R}(3)$.

Finally, suppose that $\gamma_{i_0}{\gamma_{i_1}}^{-1}\gamma_{i_2}{\gamma_{i_3}}^{-1}\gamma_{i_4}{\gamma_{i_5}}^{-1}\gamma_{i_6}{\gamma_{i_7}}^{-1}=e$. If $i_l=i_{l+1}$ for some $l=0,\ldots,7$ then we have $\mathcal{R}(4)$. Otherwise, rotating the indices if necessary, we can assume that $(i_0,\dots,i_7)$ is one of the two sequences appearing in \eqref{eq:sigma3}. That yields, respectively
\begin{eqnarray*}
4a_i-2a_j-2a_k&\equiv& 0\pmod{m_d},\ \textrm{or}\\
a_i+a_j-2a_k&\equiv& 0\pmod{m_d}.
\end{eqnarray*}
The values of the left-hand sides are in $(-m_d,m_d)$, hence the above equations hold in fact over $\zet$. That implies $i=j=k$. by our choice of the sequence $a_1,\ldots,a_d$. Now $\mathcal{R}(4)$ is proved.

Since $\mathcal{R}(2)$, $\mathcal{R}(3)$, $\mathcal{R}(4)$ hold, we showed $\mathcal{G}(4)$ as required.
\end{proof}

\section{Further results and conclusions}
\label{sect:last}

In this section we gather some comments and ramifications of our technique.

\subsection*{Choice of parameters} Our method always produces complexes $\overk$ such that in \eqref{eq:first} we have $n=|V(\overk)|=|\Gamma|$ where $\Gamma$ is the group which acts transitively. Let $d=|V(K)|$ and $d'=|V(\mathrm{bd}\ K)|$. By inspecting the proofs we easily find that:
\begin{itemize}
\item If the construction in (i) is carried out using the optimal Golomb rulers then $n\approx d^2$.
\item If the constructions in (ii) and (iii) are carried out using the pairs from Proposition~\ref{prop:groupeasy} then $n\approx (d')!$, which can be doubly-exponential in $d$. An application of Proposition~\ref{prop:groupsexist} also gives a doubly-exponential, but worse, dependence on $d$.
\end{itemize}
We also see that a variety of groups $\Gamma$ can be used. Theorem~\ref{thm:super-main}.(i) can be shown using any pair $(\Gamma,D)$ with $|D|=d$ satisfying $\mathcal{G}(2)$. To prove (ii) and (iii) one need pairs $(\Gamma, D')$, $|D'|=d'$, satisfying, respectively, $\mathcal{G}(3)$ or $\mathcal{G}(4)$. In each case the resulting complex will be transitive with respect to some subgroup of $\Gamma$.

\smallskip
Finally, note that every pair $(\Gamma,D)$ satisfies condition $\mathcal{G}(1)$. Golomb rulers from Section~\ref{sect:simple} can be alternatively defined as pairs $(\zet,D)$ which satisfy $\mathcal{G}(2)$. However, in any pair $(\Gamma,D)$ which satisfies $\mathcal{G}(3)$, the group $\Gamma$ must be non-abelian. The reason is the identity
$$e=\gamma_1{\gamma_2}^{-1}\gamma_3{\gamma_1}^{-1}\gamma_2{\gamma_3}^{-1}$$
which holds in any abelian group and refutes condition $\mathcal{R}(3)$, regardless of the choice of $D$. It would be interesting to see if parts (ii) and (iii) of Theorem~\ref{thm:super-main} hold if one requires a transitive action by an \emph{abelian} group.

\subsection*{Application to graph decompositions} A graph $H$ is called an \emph{induced isopart} in $G$ if $G$ is a union of induced subgraphs, each isomorphic to $H$. Fink and Ruiz \cite{FinkR} prove that every finite connected graph is an induced isopart of a circulant (i.e. a Cayley graph of a cyclic group). Fink \cite[Thm.2]{Fink} extends this to Cayley graphs of some other groups. Our construction resembles theirs but yields a slightly stronger result, namely a cluster of copies of $H$, and makes less restrictive assumptions regarding the choice of the group.

\subsection*{Modelling homotopy types}
It is an intriguing question whether all connected homotopy types can be modelled by vertex-transitive (or even more symmetric) simplicial complexes, and also how the answer depends on the group $\Gamma$ which acts transitively. Very little is known in this respect. The next proposition gives a non-trivial restriction in case of actions of cyclic groups.

\begin{proposition}
\label{prop:lefschetz}
If $K$ is a non-contractible, finite simplicial complex such that
\begin{itemize}
\item[i)] for every $i$ we have $\dim H_i(K;\rat)\leq 1$,
\item[ii)] the Euler characteristic $\chi(K)=\sum_i(-1)^i\dim H_i(K;\rat)$ is odd,
\end{itemize}
then $K$ is not homotopy equivalent to a $\zet/n$-vertex-transitive simplicial complex for any $n$.
\end{proposition}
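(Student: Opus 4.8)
The plan is to use a Lefschetz fixed-point argument. Suppose for contradiction that $K \htpyequiv \widetilde{K}$ where $\widetilde{K}$ is $\zet/n$-vertex-transitive, and let $g$ be a generator of the $\zet/n$-action. Since $\widetilde{K}$ is vertex-transitive, the simplicial automorphism $g$ has no fixed vertices; because a simplicial map fixing a simplex setwise but no vertex of it still contributes to Lefschetz numbers in a controlled way, I would first reduce to a genuinely fixed-point-free situation by passing to the barycentric subdivision $\mathrm{bd}\,\widetilde{K}$, on which the induced action of $g$ is fixed-point-free as a simplicial map (a barycentric vertex is a simplex of $\widetilde{K}$, and if $g$ fixes that simplex it must fix one of its vertices once we iterate — more precisely, on the second barycentric subdivision any simplicial automorphism that fixes a point fixes a vertex). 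So $g$ acts without fixed points on a complex homotopy equivalent to $K$, hence the Lefschetz number $L(g) = \sum_i (-1)^i \mathrm{tr}\big(g_* : H_i(K;\rat)\to H_i(K;\rat)\big)$ must be $0$.

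The core of the argument is then to show that condition (i) forces $L(g) \equiv \chi(K) \pmod 2$, so that oddness of $\chi(K)$ makes $L(g)$ odd, contradicting $L(g)=0$. Indeed, by (i) each $H_i(K;\rat)$ is either $0$ or one-dimensional; on a one-dimensional rational vector space the induced map $g_*$ is multiplication by some rational scalar $\lambda_i$, and since $g$ has finite order $\lambda_i$ is a root of unity in $\rat$, hence $\lambda_i = \pm 1$. Therefore $\mathrm{tr}(g_*|H_i) \in \{-1,0,1\}$, and crucially $\mathrm{tr}(g_*|H_i) \equiv \dim H_i(K;\rat) \pmod 2$ in every case. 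Summing with signs, $L(g) \equiv \sum_i (-1)^i \dim H_i(K;\rat) = \chi(K) \pmod 2$. Since $\chi(K)$ is odd this gives $L(g)$ odd, in particular $L(g)\neq 0$ — the desired contradiction. The non-contractibility hypothesis is needed only to ensure that the complex really does carry a nontrivial action to argue about (a point is vacuously $\zet/n$-transitive with $n=1$, which would otherwise be a trivial counterexample for, say, $\chi = 1$; one should phrase the statement as genuinely excluding the one-point complex, or note $n$ ranges over $n\geq 1$ with the understanding that transitivity is on a complex homotopy equivalent to a \emph{non-contractible} $K$, which has at least two vertices, so the action is by a nontrivial group and $g$ can be taken nontrivial).

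The main obstacle is the fixed-point reduction: a vertex-transitive simplicial automorphism need not be fixed-point-free \emph{as a continuous self-map of the geometric realization}, since it can fix the barycenter of an invariant simplex. I would handle this by the standard device of subdividing: on $\mathrm{bd}^2\,\widetilde{K}$ (or arguing that the action of $\zet/n$, being a finite group acting simplicially, has the property that any invariant simplex contains an invariant vertex after one subdivision since $g$ permutes the vertices of an invariant simplex and some power of $g$ fixes them pointwise — but we need $g$ itself, so take a prime $p \mid n$ and the element of order $p$, whose invariant simplices are permuted in $p$-cycles, leaving no fixed barycenter on the first subdivision). A cleaner route: apply the Lefschetz fixed point theorem in the form that if a finite-order simplicial map on a finite complex has $L=0$ then... — actually the implication I want runs the other way, so the correct tool is: a simplicial map without fixed points on (a subdivision of) a finite complex has Lefschetz number $0$. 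I would invoke this after arranging fixed-point-freeness via one barycentric subdivision applied to the element of prime order $p \mid n$ inside the cyclic group, noting that homotopy invariance of $L$ lets us compute everything on $K$ itself. Everything else is the elementary linear algebra over $\rat$ sketched above.
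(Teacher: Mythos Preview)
Your Lefschetz-plus-parity core is exactly the paper's argument: condition (i) forces each $g_*$ on $H_i(K;\rat)$ to be multiplication by $\pm 1$, so $\Lambda(g)\equiv\chi(K)\pmod 2$, and by (ii) this is nonzero.

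The gap is in your fixed-point reduction. Neither device you propose actually works. Barycentric subdivision goes the wrong way: a vertex of $\mathrm{bd}\,\widetilde K$ is a simplex of $\widetilde K$, so $g$ fixes a vertex of $\mathrm{bd}\,\widetilde K$ precisely when $g$ has a setwise-invariant simplex in $\widetilde K$ --- which is exactly the situation you are trying to rule out, not something subdivision eliminates. Passing to an element $h$ of prime order $p\mid n$ fails too: the $h$-orbits on vertices have size $p$, and any simplex whose vertex set is a single such orbit is $h$-invariant, so $h$ need not be fixed-point-free. The paper sidesteps all of this by running Lefschetz in the \emph{forward} direction: from $\Lambda(g)\neq 0$ it concludes that $g$ \emph{has} a fixed point, lying in the interior of some simplex $\sigma$, so $g(\sigma)=\sigma$. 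But the vertex set of a $g$-invariant simplex is a union of $\langle g\rangle$-orbits, and since $\langle g\rangle=\zet/n$ acts transitively on vertices there is only one orbit, all of $V(\widetilde K)$. Hence $\widetilde K$ is the full simplex, contradicting non-contractibility. The same one-line observation (a simplex invariant under the generator must contain every vertex) would close your contrapositive version directly, with no subdivision or prime-order detour needed.
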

For example, neither $\er P^2$ nor $S^2\vee S^1$ are homotopy equivalent to a $\zet/n$-vertex-transitive complex for any $n$.
\begin{proof}
Suppose $K$ is a $\zet/n$-transitive simplicial complex which satisfies i) and ii). We will show that $K$ is a simplex. Let $f:K\to K$ be the action of the generator of $\zet/n$. The map $f$ induces on each non-zero group $H_i(K;\rat)$ multiplication by $\pm 1$ and therefore the Lefschetz number of $f$ satisfies
\begin{eqnarray*}
\Lambda(f)&=&\sum_i(-1)^i\mathrm{tr}(f_*:H_i(K;\rat)\to H_i(K;\rat)) =\\
&=&\sum_i(-1)^i\cdot(\pm 1)\cdot\dim H_i(K;\rat)\equiv \chi(K)\pmod{2}.
\end{eqnarray*}
By ii) we get $\Lambda(f)\neq 0$, and so $f$ has a fixed point by the Lefschetz theorem. It follows that $K$ has a $\zet/n$-fixed point. Combined with vertex-transitivity of the $\zet/n$-action that means $K$ is a simplex.
\end{proof}

\subsection*{Connectivity} By using joins we can increase the connectivity of the construction in Theorem~\ref{thm:super-main}. If $K$ is connected then the $t$-fold join $\overk^{\ast t}=\underbrace{\overk\ast\cdots\ast\overk}_t$ of $\overk$ with itself is a $(2t-2)$-connected, vertex-transitive simplicial complex which contains the suspension $\susp^{2t-2}K$ as a homotopy direct summand. The complex $\overk^{\ast t}$ has vertex-transitive actions of groups such as $\Gamma\times\zet/t\subseteq \Gamma\times \Sigma_t\subseteq \Gamma\wr \Sigma_t$, where $\Gamma$ was the group acting on $\overk$.

\subsection*{Examples}
We end with some inspirational examples of torsion in  $\zet/n$-vertex- and simultaneously $\zet/n$-facet-transitive complexes found by computer search. 
\begin{itemize}
\item Let $C_n^r$ be the $r$-th power of the $n$-cycle, i.e. the graph obtained from the $n$-cycle $C_n$ by connecting each pair of vertices in distance at most $r$. Then for $k\geq 6$ the open neighbourhood complex $N(C_{2k+1}^3)$ has homology groups $$(H_0,H_1,H_2,\ldots)=(\zet,\zet,\zet/2,0,\ldots).$$
\item Let $K_{4k+2}$ be the simplicial complex with vertex set $\zet/(4k+2)$ and maximal faces $\{x,x+1,x+2,x+4,x+2k+4\}$ for $x\in\zet/(4k+2)$ (addition modulo $4k+2$). Then for $k\geq 2$ the homology of $K_{4k+2}$ is $$(H_0,H_1,H_2,\ldots)=(\zet,\zet\oplus\zet/2,0,\ldots).$$
\end{itemize}

\subsection*{Acknowledgements.} I would like to thank Henry Adams, Florian Frick, Frank Lutz, Christopher Peterson and Corrine Previte for inspiration, discussions and references.


\end{document}